\newtheorem{Theorem}{Theorem}[section]
\newtheorem{Lemma}[Theorem]{Lemma}
\newtheorem{Proposition}[Theorem]{Proposition}
\newtheorem{Definition}[Theorem]{Definition}
\theoremstyle{definition}
\newtheorem{Example}[Theorem]{Example}
\newtheorem{Remark}[Theorem]{Remark}
\newcommand{\CC}{\mathbb{C}}
\newcommand{\ZZ}{\mathbb{Z}}
\newcommand{\NN}{\mathbb{N}}
\newcommand{\m}{\mathfrak{m}}
\newcommand{\Spec}{\operatorname{Spec}}
\newcommand{\Specm}{\operatorname{Specm}}
\newcommand{\ad}{\operatorname{ad}}
\newcommand{\kar}{\operatorname{char}}
\newcommand{\Span}{\operatorname{Span}}
\newcommand{\Gr}{{\operatorname{Gr}}}
\newcommand{\leftbimod}[3]{\vphantom{#1}^{#2}{\kern-#3pt #1}}
\numberwithin{equation}{subsection}
\title{Two properties of endomorphisms of Weyl algebras}
\author{Niels Lauritzen and Jesper Funch Thomsen} 
\begin{document}
\maketitle 

\begin{abstract}
We show that endomorphisms of Weyl algebras over fields
of characteristic zero are flat and that birational endomorphisms
are automorphisms by reducing to positive characteristic. We
also give examples showing that endomorphisms of Weyl algebras are not
in general flat over fields of positive characteristic.
\end{abstract}

\section*{Introduction}

We prove that endomorphisms of Weyl algebras over a field of
characteristic zero are flat.  More precisely, let $A$ be the $n$-th
Weyl algebra over a field of characteristic zero and
$\varphi:A\rightarrow A$ an endomorphism with $S = \varphi(A)$.  We
prove that $S \subset A$ is a flat ring extension in the sense of
\cite[Ch. 2, \S2.8]{Bjork} i.e., $A$ is flat both as a left and a
right $S$-module. The endomorphism $\varphi$ gives an extension of
division rings $Q(S) \subset Q(A)$. We call $\varphi$ birational if
$Q(S) = Q(A)$ and show that birational endomorphisms of Weyl algebras
over fields of characteristic zero are automorphisms. This is a
non-commutative analogue (over fields of characteristic zero) of
Keller's classical result that birational endomorphisms of affine
spaces with invertible Jacobian are automorphisms \cite{Keller}. In
general, $Q(S)\subset Q(A)$ is an extension whose left- and right
dimensions are bounded by $\deg(\varphi)^{2n}$ (see Proposition
\ref{PropDegBound}).

The key component in our approach is reduction to positive
characteristic, where the $n$-th Weyl algebra is finite free over its
center, which is a polynomial ring in $2n$ variables \cite{Revoy} with canonical
Poisson bracket coming from the commutator in
the lifted Weyl algebra (see \cite{Kontsevich1} and \S \ref{antiquantization} of this paper).

The Dixmier conjecture \cite[\S11.1]{Dixmier} states that an
endomorphism of the first Weyl algebra, i.e. $n = 1$ above, 
over a field of characteristic zero
is in fact an automorphism. 
The Jacobian conjecture states 
that an endomorphism of affine $n$-space with invertible Jacobian over a field of characteristic zero is
an automorphism for $n\geq 2$.
The natural extension of the Dixmier conjecture to
$n\geq 1$ is a non-commutative analogue\footnote{The Dixmier conjecture for the $n$-th Weyl algebra implies the
  Jacobian conjecture for affine $n$-space, see~\cite[p.~297]{Bass}. The Jacobian conjecture for affine $2n$-space implies the Dixmier conjecture for the $n$-th Weyl algebra, see~\cite{Kontsevich1} and \cite{Tsuchimoto1}.} of the Jacobian
conjecture and seems inherently intractable. 

It is known over a field of arbitrary characteristic, that an endomorphism of affine $n$-space
with invertible Jacobian is flat \cite[(2.1) \textsc{Theorem}]{Bass}\cite{Miyanishi}.
We give examples of endomorphisms of the first Weyl algebra over fields of positive characteristic for which
flatness fails.

A proof of flatness of endomorphisms of Weyl algebras over fields of characteristic zero was presented in \cite{Tsuchimoto3}, but it seems to contain 
a mistake (see \S \ref{FlatCharZero} of this
paper for further details), which we at present do not know how to circumvent.

\section{Preliminaries}

 Most of this section is aimed at introducing the Weyl algebra over
 commutative rings of prime characteristic and the reduction from zero
 to positive characteristic. Except for a few results, we have
 deliberately done this in some detail to make the paper self
 contained. The study of the Weyl algebra over rings of prime
 characteristic was initiated in \cite{Revoy}.

Throughout this paper $\NN$ denotes the natural numbers $\{0, 1, 2,
\dots\}$ and $R$ a commutative ring.

\subsection{The Weyl algebra over a commutative ring}

\begin{Proposition}\label{Prop00}
Let $S$ be a ring and $\partial, x\in S$ with $[\partial, x] = \partial x - x \partial = 1$. With the convention $\partial^r = x^r = 0$ for $r<0$,
\begin{enumerate}[(i)]
\item
$$ 
\ad(\partial)^i \ad(x)^j \left( x^m\partial^n\right) = (-1)^j i!\, j!\, \binom{m}{i} \binom{n}{j} x^{m-i} \partial^{n-j},
$$
\item
$$
[\partial^m, x^n]=\sum_{k\geq 1} k! \binom{m}{k} \binom{n}{k} x^{n-k} \partial^{m-k}
$$
\end{enumerate}
for $i, j, m, n\in \NN$.
\end{Proposition}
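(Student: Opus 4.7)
For part (i), the key observation is that $\ad(\partial)$ and $\ad(x)$ are commuting derivations on $S$: indeed $[\ad(\partial),\ad(x)] = \ad([\partial,x]) = \ad(1) = 0$, and each is a derivation of $S$. Because of this, I may apply $\ad(\partial)^i$ and $\ad(x)^j$ in either order and analyze them separately. First I would compute $\ad(\partial)(x^m\partial^n)$: since $\ad(\partial)$ is a derivation that kills $\partial$, Leibniz gives $\ad(\partial)(x^m\partial^n) = \ad(\partial)(x^m)\,\partial^n$, and a short induction on $m$ using $[\partial,x]=1$ shows $\ad(\partial)(x^m) = m x^{m-1}$. Iterating,
$$
\ad(\partial)^i(x^m\partial^n) = m(m-1)\cdots(m-i+1)\,x^{m-i}\partial^n = i!\binom{m}{i}x^{m-i}\partial^n.
$$
Symmetrically, $\ad(x)$ kills $x$ and satisfies $\ad(x)(\partial^n) = -n\partial^{n-1}$, so
$$
\ad(x)^j(x^m\partial^n) = (-1)^j j!\binom{n}{j}x^m\partial^{n-j}.
$$
Composing the two commuting operators yields (i).

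For part (ii), I would first establish the \emph{normal-ordering} identity
$$
\partial^m f = \sum_{k\geq 0}\binom{m}{k}\ad(\partial)^k(f)\,\partial^{m-k},
$$
valid for any $f\in S$. This is proved by induction on $m$ using only the relation $\partial f = f\partial + \ad(\partial)(f)$ together with Pascal's rule $\binom{m}{k}+\binom{m}{k-1} = \binom{m+1}{k}$. Setting $f = x^n$ and invoking the special case of (i) (with the second factor trivial) that says $\ad(\partial)^k(x^n) = k!\binom{n}{k}x^{n-k}$, one obtains
$$
\partial^m x^n = \sum_{k\geq 0} k!\binom{m}{k}\binom{n}{k}x^{n-k}\partial^{m-k}.
$$
The $k=0$ term is exactly $x^n\partial^m$, so subtracting it gives the formula for $[\partial^m, x^n]$.

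The calculation is essentially mechanical once one knows (i) and notices that $\ad(\partial)$ and $\ad(x)$ commute. The only delicate point is bookkeeping with the binomial coefficients and the convention $\binom{m}{i} = 0$ when $i > m$ (matching $x^{m-i}=0$ for $m<i$), which makes the formulas valid as written for all $i,j,m,n\in\NN$ without case distinctions. No step looks like a serious obstacle.
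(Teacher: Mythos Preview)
Your argument is correct. For part (i) you carry out exactly what the paper sketches: exploit that $\ad(\partial)$ and $\ad(x)$ are derivations with $\ad(\partial)(x)=1=-\ad(x)(\partial)$, and iterate. Your remark that the two operators commute (since $\ad([\partial,x])=\ad(1)=0$) is a clean way to justify handling the $\ad(\partial)^i$ and $\ad(x)^j$ factors independently.

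For part (ii) your route differs from the paper's. The paper suggests a double induction on $m$ and $n$ via the recursions
\[
[\partial^{m+1},x^n]=[\partial,x^n]\,\partial^m+\partial\,[\partial^m,x^n],\qquad
[\partial^m,x^{n+1}]=[\partial^m,x]\,x^n+x\,[\partial^m,x^n],
\]
citing Littlewood and Dixmier. You instead establish the normal-ordering identity $\partial^m f=\sum_{k\ge 0}\binom{m}{k}\ad(\partial)^k(f)\,\partial^{m-k}$ by a single induction on $m$ and then specialize to $f=x^n$. Your approach is arguably tidier: the normal-ordering identity is a general fact about any element $\partial$ in any ring (it uses only that $\ad(\partial)$ is a derivation), so the relation $[\partial,x]=1$ enters only at the last step via $\ad(\partial)^k(x^n)=k!\binom{n}{k}x^{n-k}$. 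The paper's double induction stays closer to the commutator itself and exhibits the $m\leftrightarrow n$ symmetry more directly. Both arguments are short and elementary.
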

\begin{proof}
The formula in $(i)$ follows using that $\ad(x)$ and $\ad(\partial)$ are derivations of $S$ with 
$\ad(\partial)(x) = [\partial, x] = -\ad(x)(\partial) = 1$.

The identity in $(ii)$ goes back to \cite[\textsc{Theorem}
  XIII]{Littlewood}. A proof may be given by induction using that
$[\partial^{m+1}, x^n] = [\partial, x^n] \partial^m + \partial
[\partial^m, x^n]$ and $[\partial^m, x^{n+1}] = [\partial^m, x] x^n +
x [ \partial^m, x^n]$ (see \cite[Lemma 2.1]{Dixmier}).
\end{proof}

\begin{Definition}
The $n$-th Weyl algebra $A_n(R)$ over $R$ 
is the
free $R$-algebra on $x_1, \dots, x_n, \partial_1, \dots, \partial_n$ 
with relations
\begin{align}\label{commrules}
\begin{split}
[x_i, x_j] &= 0\\
[\partial_i, \partial_j] &= 0\\
[\partial_i, x_j] &= \delta_{ij}
\end{split}
\end{align}
for $1\leq i, j \leq n$, where $\delta_{ij}$ denotes the Kronecker delta.
By abuse of notation we let $x_i$ and $\partial_i$ denote their canonical images in $A_n(R)$ for $i = 1, \dots, n$. 
\end{Definition}

\begin{Definition}
For $v = (v_1, \dots, v_n)\in \NN^n$ and $m\in \NN$, the
notation $v\leq m$ means that $v_i\leq m$ for every $i = 1, \dots, n$.
For $\alpha = (\alpha_1, \dots, \alpha_n), \beta = (\beta_1, \dots,
\beta_n)\in \NN^n$, we let $x^\alpha = x_1^{\alpha_1} \cdots
x_n^{\alpha_n}$ and $\partial^\beta = \partial_1^{\beta_1} \cdots
\partial_n^{\beta_n}$ in $A_n(R)$. 
The element 
$x^\alpha \partial^\beta\in A_n(R)$ is called a monomial. 
\end{Definition}

\begin{Proposition}\label{Prop11}
The Weyl algebra $A_n(R)$ is a free $R$-module with a basis consisting of the monomials 
$$
M = \{x^\alpha \partial^\beta \mid \alpha, \beta\in \NN^n\}.
$$
\end{Proposition}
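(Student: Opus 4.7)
The plan is to prove the two standard halves of a basis statement separately: that the monomials in $M$ span $A_n(R)$ as an $R$-module, and that they are $R$-linearly independent. Spanning is the easier half. Every element of $A_n(R)$ is a finite $R$-linear combination of words in the generators $x_i,\partial_i$, and I would use the defining relations \eqref{commrules} to bring any such word into the normal form $x^\alpha\partial^\beta$: the relations $x_ix_j=x_jx_i$ let one sort the $x$'s by index, $\partial_i\partial_j=\partial_j\partial_i$ does the same for the $\partial$'s, and each occurrence of $\partial_ix_j$ is replaced by $x_j\partial_i+\delta_{ij}$. An induction on the total degree of the word shows that every word is an $R$-linear combination of elements of $M$.

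For linear independence I would construct an explicit $A_n(R)$-module on which distinct monomials act distinctly. Let $F$ be the free $R$-module with basis $\{v_{\alpha,\beta}\mid\alpha,\beta\in\NN^n\}$ and let $e_i\in\NN^n$ denote the $i$-th standard unit vector. Define $R$-linear endomorphisms of $F$ by
$$X_i(v_{\alpha,\beta}) = v_{\alpha+e_i,\beta}, \qquad D_i(v_{\alpha,\beta}) = v_{\alpha,\beta+e_i} + \alpha_i\,v_{\alpha-e_i,\beta},$$
with the convention that $v_{\gamma,\delta}=0$ whenever any coordinate of $\gamma$ or $\delta$ is negative. A direct computation on a general basis vector verifies the Weyl relations $[X_i,X_j]=0$, $[D_i,D_j]=0$, and $[D_i,X_j]=\delta_{ij}$; by the universal property of $A_n(R)$ this turns $F$ into an $A_n(R)$-module in which $x_i$ acts as $X_i$ and $\partial_i$ as $D_i$. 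Acting on the distinguished vector $v_{0,0}$, successive applications of the $D_i$ give $\partial^\beta\cdot v_{0,0}=v_{0,\beta}$ (the correction terms vanish since the entries of $0\in\NN^n$ are zero), and then $x^\alpha\cdot v_{0,\beta}=v_{\alpha,\beta}$. Thus the $R$-linear map $A_n(R)\to F$, $a\mapsto a\cdot v_{0,0}$, sends the monomials of $M$ to distinct basis vectors of $F$, so any $R$-linear relation among elements of $M$ inside $A_n(R)$ pushes forward to a relation among basis vectors of $F$ and must be trivial.

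The one non-trivial calculation is the verification of $[D_i,X_j]=\delta_{ij}$ on $F$: the correction term $\alpha_i v_{\alpha-e_i,\beta}$ in the definition of $D_i$ is tuned precisely for this, since on $v_{\alpha,\beta}$ the $i$-th coordinate of $\alpha+e_j$ differs from $\alpha_i$ exactly by $\delta_{ij}$, and this is what produces the identity on the right. The relations $[X_i,X_j]=0$ and $[D_i,D_j]=0$ are essentially immediate from the definitions. Combined with spanning, this establishes that $M$ is an $R$-basis of $A_n(R)$; a virtue of this approach is that it works uniformly for any commutative ring $R$, without passing through a characteristic-zero universal ring.
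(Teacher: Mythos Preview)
Your argument is correct. The spanning half is the standard straightening via the relations, and for linear independence your module $F$ does the job: the verification of the Weyl relations on $F$ is accurate (the key point being that $(\alpha+e_j)_i-\alpha_i=\delta_{ij}$, exactly as you observe), and the map $a\mapsto a\cdot v_{0,0}$ indeed sends $x^\alpha\partial^\beta$ to the basis vector $v_{\alpha,\beta}$. It is worth noting that your $F$ is nothing other than the left regular representation of $A_n(R)$ written out explicitly on the would-be basis, which is why the formulas for $X_i,D_i$ match left multiplication by $x_i,\partial_i$ on monomials; defining $F$ abstractly and verifying the relations directly is what breaks the apparent circularity.

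As for comparison with the paper: the paper does not give its own argument here but simply cites \cite[\S2, Lemma~3]{GK}. Your proof is a self-contained substitute for that citation. One advantage of your construction over the more familiar polynomial representation on $R[t_1,\dots,t_n]$ (with $\partial_i$ acting as $\partial/\partial t_i$) is that the latter fails to separate monomials when $R$ has positive characteristic, since $\partial_i^p$ then acts as zero; your doubled index set sidesteps this and works uniformly over any commutative ring, which is exactly the generality the paper needs.
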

\begin{proof}
See \cite[\S2, Lemma 3]{GK}.
\end{proof}

\subsection{Positive characteristic}

If $\kar(R) > 0$, $A_n(R)$ is a finitely generated module over its center.
The following result is a
consequence of Proposition \ref{Prop00} and Proposition \ref{Prop11}.

\begin{Proposition}\label{LemmaWC}
Suppose that 
$\kar(R) = m > 0$ and let 
$$
C = R[x_1^m, \dots, x_n^m, \partial_1^m, \dots, \partial_n^m]\subset A_n(R).
$$  
Then $C$ is a central subalgebra and $A_n(R)$ is a free
module over $C$ of rank $m^{2n}$ with basis
$$
\{x^\alpha \partial^\beta \mid \alpha, \beta\in \NN^n, 0 \leq \alpha, \beta \leq m-1\}.
$$
\end{Proposition}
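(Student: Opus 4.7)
The plan is to verify centrality of the algebra generators of $C$ directly from Proposition~\ref{Prop00}(ii), and then transfer the $R$-basis of Proposition~\ref{Prop11} into a $C$-basis via Euclidean division of exponents by $m$.

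\textbf{Centrality of the generators.} Since the $x_i$ pairwise commute and the $\partial_i$ pairwise commute, to show $x_i^m$ is central in $A_n(R)$ it suffices to check that it commutes with each $\partial_j$, and similarly for $\partial_i^m$. For $i\neq j$ this is immediate from the defining relations. For $i=j$, Proposition~\ref{Prop00}(ii) specialises to
$$
[\partial_i,\,x_i^m] \;=\; m\,x_i^{m-1}, \qquad [\partial_i^m,\,x_i] \;=\; m\,\partial_i^{m-1},
$$
both of which vanish because $\kar(R)=m$. Hence every algebra generator of $C$ lies in the centre of $A_n(R)$, so $C$ is a commutative central subalgebra.

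\textbf{Spanning.} Given any monomial $x^\alpha\partial^\beta$, apply Euclidean division $\alpha_i = ma_i+\alpha'_i$ and $\beta_i = mb_i+\beta'_i$ with $0\le\alpha'_i,\beta'_i\le m-1$. Using commutativity among the $x_i$'s and among the $\partial_i$'s together with the centrality just established, we rewrite
$$
x^\alpha\partial^\beta \;=\; x^{\alpha'}\partial^{\beta'}\cdot \prod_{i=1}^n (x_i^m)^{a_i}(\partial_i^m)^{b_i},
$$
where the second factor lies in $C$. Combined with Proposition~\ref{Prop11}, this shows that the $m^{2n}$ reduced monomials $x^{\alpha'}\partial^{\beta'}$ with $0\le\alpha',\beta'\le m-1$ generate $A_n(R)$ as a $C$-module.

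\textbf{Linear independence.} Every element of $C$ is an $R$-linear combination of the central monomials $\prod_i (x_i^m)^{a_i}(\partial_i^m)^{b_i}$. A hypothetical $C$-linear dependence $\sum c_{\alpha',\beta'}\,x^{\alpha'}\partial^{\beta'}=0$ can therefore, after inserting these expansions and sliding central factors to the right as in the previous step, be rewritten as an $R$-linear combination of monomials $x^{\alpha'+ma}\partial^{\beta'+mb}$ summing to zero. Since $(\alpha',a)\mapsto \alpha'+ma$ is a bijection $[0,m-1]^n\times\NN^n \to \NN^n$ and likewise on the $\partial$ side, these monomials are pairwise distinct, so Proposition~\ref{Prop11} forces every $R$-coefficient to vanish; hence all $c_{\alpha',\beta'}$ vanish. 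The substantive computation is the centrality step, which is where the characteristic hypothesis is used; the rest is bookkeeping built on Proposition~\ref{Prop11}.
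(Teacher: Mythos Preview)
Your proof is correct and follows precisely the approach indicated by the paper, which simply states that the result is a consequence of Proposition~\ref{Prop00} and Proposition~\ref{Prop11} without giving further details. You have supplied exactly those details: centrality from Proposition~\ref{Prop00}(ii) together with $\kar(R)=m$, and the spanning/independence argument from Proposition~\ref{Prop11} via Euclidean division of exponents.
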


\begin{Example}
Consider $R = \ZZ/6\ZZ$. Then $3 x^2$ is a central element in $A_1(R)$, but
$3x^2\not\in R[x^6, \partial^6]$.
\end{Example}

The following result is central in this paper. Here we basically follow 
\cite[Theorem 3.1 and Proposition 3.2]{AdjamagboEssen} in the
proof.

\begin{Theorem}\label{Theorem00}
Suppose that $\kar(R) = p$ and let
$$
C = R[x_1^p, \dots, x_n^p, \partial_1^p, \dots, \partial_n^p]\subset A_n(R),
$$  
where $p$ is
a prime number.
\begin{enumerate}[(i)]
\item
The center of $A_n(R)$ is equal to $C$.
\item
If $X_1, \dots, X_n, D_1, \dots, D_n\in A_n(R)$ satisfy the
commutation relations for the Weyl algebra i.e.,
\begin{align}
\begin{split}
[X_i, X_j] &= 0\\
[D_i, D_j] &= 0\\
[D_i, X_j] &= \delta_{ij}
\end{split}
\end{align}
for $i, j = 1, \dots, n$, then 
$$
\{X^\alpha D^\beta \mid 0 \leq \alpha, \beta \leq p-1\}
$$ 
is a basis for $A_n(R)$ as a module over $C$.
\item Let $\varphi:A_n(R)\rightarrow A_n(R)$ be an R-algebra endomorphism. Then
\begin{enumerate}[(a)]
\item
$\varphi(C)\subset C$.
\item
$\varphi$ is injective/surjective if $\varphi|_C$ is injective/surjective. 
\end{enumerate}
\end{enumerate}
\end{Theorem}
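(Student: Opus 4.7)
For~(i), I would expand a putative central element as $z=\sum c_{\alpha\beta}\,x^\alpha\partial^\beta$ in the $R$-basis of Proposition~\ref{Prop11} and apply Proposition~\ref{Prop00}(i) to the identities $[\partial_i,z]=0$ and $[x_i,z]=0$. Linear independence of the basis monomials then forces $\alpha_i\,c_{\alpha\beta}=\beta_i\,c_{\alpha\beta}=0$ in $R$ for every~$i$; since $\kar(R)=p$ prime makes $R$ into an $\FF_p$-algebra, the integers $1,\dots,p-1$ are units of $R$, so $c_{\alpha\beta}\ne 0$ forces $p\mid\alpha_i$ and $p\mid\beta_i$ for every $i$, placing $z$ in~$C$.

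For~(ii), I would express each $X^\alpha D^\beta$ with $0\le\alpha,\beta\le p-1$ in the $C$-basis $\{x^\alpha\partial^\beta\}$ of Proposition~\ref{LemmaWC}, forming a $p^{2n}\times p^{2n}$ matrix $M$ over $C$; the task is then to show $\det M\in C^\times$. Since an element of $C$ is a unit iff it lies outside every maximal ideal, it suffices to fix a maximal ideal $\m\subset C$ with residue field $k=k(\m)$ and prove that $M\bmod\m$ is invertible. The residue algebra $A_n(R)/\m A_n(R)$ is a $p^{2n}$-dimensional $k$-algebra which, after base change to $\overline k$ and shifting $x_i,\partial_i$ by $p$-th roots of the constants they reduce to, becomes the truncated Weyl algebra $A_n(\overline k)/\langle x_i^p,\partial_i^p\rangle$; the latter acts faithfully on $\overline k[x_1,\dots,x_n]/(x_i^p)$ and so is isomorphic to $\Mat_{p^n}(\overline k)$. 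Hence $A_n(R)/\m A_n(R)$ is central simple over~$k$. The universal property of the Weyl algebra (implicit in Proposition~\ref{Prop11}) supplies a $k$-algebra map $\pi\colon A_n(k)\to A_n(R)/\m A_n(R)$ with $\pi(x_i)=\overline X_i$, $\pi(\partial_i)=\overline D_i$; using that $A_n(k)$ is Azumaya over its centre $k[x_i^p,\partial_i^p]$, the kernel of $\pi$ is generated by an ideal of the centre, and the finite-dimensionality of the image forces this ideal to be a $k$-rational maximal one. The image is then itself central simple over $k$ of dimension $p^{2n}$, and so by dimension fills all of $A_n(R)/\m A_n(R)$; consequently the $p^{2n}$ monomials $\overline{X^\alpha D^\beta}$ span and form a $k$-basis of the residue algebra. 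Hence $\det M\notin\m$ for every $\m$, so $\det M\in C^\times$ and $\{X^\alpha D^\beta\}$ is a $C$-basis of $A_n(R)$.

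From~(ii) I would derive~(iii)(a) by expanding $\varphi(x_i^p)=X_i^p$ (with $X_i=\varphi(x_i),\ D_i=\varphi(\partial_i)$) in the $C$-basis from~(ii): the commutator identities $[X_i^p,X_j]=[X_i^p,D_j]=0$ furnished by Proposition~\ref{Prop00}(ii), together with $C$-linear independence, successively kill all coefficients with $\beta\ne 0$ and then all coefficients with $\alpha\ne 0$, so the expansion collapses to a single element of $C\cdot 1$; hence $X_i^p\in C$, similarly $D_i^p\in C$, and since $C$ is the $R$-algebra generated by $x_i^p,\partial_i^p$ this yields $\varphi(C)\subset C$. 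For~(iii)(b), writing $a=\sum c_{\alpha\beta}\,x^\alpha\partial^\beta\in A_n(R)$ with $c_{\alpha\beta}\in C$ (Proposition~\ref{LemmaWC}) gives $\varphi(a)=\sum\varphi(c_{\alpha\beta})\,X^\alpha D^\beta$, which is already the unique expansion of $\varphi(a)$ in the $C$-basis from~(ii); injectivity and surjectivity of $\varphi$ therefore translate verbatim to the corresponding properties of $\varphi|_C$. The principal obstacle in this plan is the identification of the residue algebra as a central simple $k$-algebra and the accompanying argument that the subalgebra generated by $\overline X_i,\overline D_i$ fills the entire quotient; this is where one really uses the Azumaya structure of the Weyl algebra in positive characteristic.
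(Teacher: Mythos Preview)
Your argument is correct. Parts~(i) and~(iii) are essentially the paper's proof: the paper likewise computes $[\partial_i,z]$ and $[x_i,z]$ via Proposition~\ref{Prop00} to pin down the center, and likewise observes that $X_i^p,D_i^p$ commute with every $X_j,D_j$ and then invokes~(ii) to place them in~$C$.

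Part~(ii), however, takes a genuinely different route. The paper argues more elementarily: linear independence of $M=\{X^\alpha D^\beta\}$ comes from repeatedly applying $\ad(X_i),\ad(D_i)$ and Proposition~\ref{Prop00}; spanning is first proved when $R$ is an integral domain by passing to the fraction field $K$ of $C$, noting that $M$ must be a $K$-basis of the $p^{2n}$-dimensional space $A_n(R)\otimes_C K$, and then using Proposition~\ref{Prop00} once more to see that the $K$-coefficients already lie in~$C$; the general case is reduced to the noetherian (hence finitely-many-minimal-primes) case and handled by iterating $A_n(R)=N+P\,A_n(R)$ over the minimal primes. Your approach instead packages~(ii) as invertibility of a single change-of-basis matrix over~$C$, checks it fibrewise at maximal ideals of~$C$, and uses the Azumaya/central-simple structure of $A_n(k)$ to show that the images $\overline{X_i},\overline{D_i}$ generate each fibre. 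This is heavier in prerequisites (you are invoking that $A_n(k)$ is Azumaya over its center and that fibres of $A_n(R)$ over $\Specm(C)$ are forms of $\Mat_{p^n}$), but it is more conceptual, treats all $R$ uniformly without a domain/noetherian reduction, and makes transparent why the statement is really about the geometry of $\Spec C$. The paper's argument, by contrast, stays entirely within Proposition~\ref{Prop00} and elementary commutative algebra, which keeps the exposition self-contained.
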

\begin{proof}
Proposition \ref{LemmaWC} implies that $C$ is a central subalgebra and
that $A_n(R)$ is a free module over $C$ with basis
$\{x^\alpha \partial^\beta \mid 0 \leq \alpha, \beta \leq p-1\}$.
Suppose that
$$
z = \sum_{0\leq \alpha, \beta \leq p-1} \lambda_{\alpha, \beta} x^\alpha \partial^\beta
$$
is an element of the center of $A_n(R)$
with $\alpha, \beta\in \NN^n$ and $\lambda_{\alpha, \beta}\in C$. 
If $\lambda_{\alpha, \beta}\neq 0$ for some $(\alpha, \beta)\neq (0,0)$, 
then there exists an element $D\in \{x_1, \dots, x_n, \partial_1, \dots, 
\partial_n\}$ with $[D, z]\neq 0$ by Proposition \ref{Prop00}. This proves
$(i)$.

Let $M = \{X^\alpha D^\beta \mid 0 \leq \alpha, \beta \leq p-1\}$. Applying
$\ad(D_i)$ and $\ad(X_i)$ successively, it follows by Proposition \ref{Prop00}
that $M$ is linearly independent over $C$. Suppose that
$R$ is an integral domain and let $K$ denote the field of fractions of $C$.
Then every element in $A_n(R)$ is a $K$-linear combination 
of elements in $M$. However, the formula in 
Proposition \ref{Prop00} applies to show that the coefficients in 
such a linear combination belong
to $C$ proving that $M$ is a generating set over $C$.

In the proof of $(ii)$ for general rings of characteristic $p$, we may
assume that $R$ is noetherian by replacing $R$ with the $\ZZ$-algebra
generated by the coefficients of $X_1, \dots, X_n, D_1, \dots, D_n$ in
the monomial basis from Proposition \ref{Prop11}.
This assumption provides the existence of finitely
many prime ideals $P_1, \dots, P_m\subset R$ such that
\begin{equation}\label{zeroint}
P_1 \cdots P_m = (0).
\end{equation}

Let $N$ denote the
$C$-submodule of $A_n(R)$ generated by $M$. By the
integral domain case, we have
\begin{equation}\label{iter}
A_n(R) = N + P\, A_n(R)
\end{equation}
for $P = P_1, \dots, P_n$.
By iterating \eqref{iter} we get $N = A_n(R)$ using \eqref{zeroint}. This proves
$(ii)$.

For the proof of $(iii)$, let $X_i  = \varphi(x_i)$ and
$D_i = \varphi(\partial_i)$ for $i = 1, \dots, n$. Then
$X_i^p, D_i^p\in C$ by $(ii)$, since $[X_i, D_j^p] = 0$ and
$[D_i, X_j^p] = 0$ for $i, j = 1, \dots, n$. Therefore
$\varphi(C) \subset C$ and $(a)$ is proved. If
$\varphi|_C$ is injective/surjective, then $\varphi$ is 
injective/surjective again using $(ii)$. 
This proves $(b)$.
\end{proof}

\subsection{The Poisson bracket on the center}
\label{antiquantization}

In this section we recall, following Belov-Kanel and Kontsevich  \cite{Kontsevich1}, how the standard Poisson bracket on the center in
prime characteristic is related to the commutator in the Weyl algebra.

\begin{Definition}\label{DefPoisson}
A Poisson algebra $P$ over $R$ is a 
commutative $R$-algebra with an $R$-bilinear skew-symmetric pairing
$\{\cdot,\cdot\}: P\times P\rightarrow P$ called the Poisson bracket, such that $(P, \{\cdot, \cdot\})$
is a Lie algebra over $R$ and $\{a, \cdot\}: P\rightarrow P$ is a derivation
for every $a\in P$ i.e., the \emph{Leibniz rule} $\{a, b c\} = \{a, b\} c + b \{a, c\}$ holds for every $b, c\in A$.
A Poisson ideal $I\subset P$ is an ideal with the property that
$\{a, x\}\in I$ for every $a\in P$ and $x\in I$.
\end{Definition}

\begin{Example}
In general, a
Poisson bracket $\{\cdot, \cdot\}$ on a Poisson $R$-algebra $A$ generated by 
$\xi_1, \dots, \xi_m\in A$ is uniquely determined by $\{\xi_i, \xi_j\}$ for
$i, j = 1, \dots, m$.

The standard Poisson bracket on 
the polynomial ring $P = R[x_1, \dots, x_n, y_1, \dots, y_n]$ 
is uniquely determined
by
\begin{align*}
\{x_i, x_j\} &= 0\\
\{y_i, y_j\} &= 0\\
\{x_i, y_j\} &= \delta_{ij}
\end{align*}
for $i, j = 1, \dots, n$ and given by the formula 
\begin{equation}\label{PoissonBracket}
\{f, g\} = \sum_{i = 1}^n   \left(
\frac{\partial f}{\partial x_i} \frac{\partial g}{\partial y_i} -
 \frac{\partial f}{\partial y_i} \frac{\partial g}{\partial x_i}\right),
\end{equation}
where $f, g\in P$. 
\end{Example}

\begin{Proposition}\label{PropDetOne}
Assume that $R$ is an integral domain and
let $\varphi$ be an $R$-endomorphism of $P = 
R[x_1, \dots, x_n, y_1, \dots, y_n]$ preserving the Poisson bracket
in \eqref{PoissonBracket} i.e., 
$\{f, g\} = \{\varphi(f), \varphi(g)\}$ for every $f, g\in P$. Then
$\det J(\varphi) = \pm 1$, where $J(\varphi)$ is the Jacobian matrix of
$\varphi$ with columns indexed by $x_1, \dots, x_n, y_1, \dots, y_n$ and 
rows by the coordinate functions of $\varphi$.
\end{Proposition}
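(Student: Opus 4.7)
The plan is to encode the Poisson-bracket preservation as a matrix identity involving $J(\varphi)$ and the standard symplectic form, then take determinants. Concretely, write the Poisson bracket in \eqref{PoissonBracket} as
$$
\{f,g\}=(\nabla f)\,\Omega\,(\nabla g)^T,\qquad \Omega=\begin{pmatrix}0 & I_n\\ -I_n & 0\end{pmatrix},
$$
where $\nabla f$ is the row vector of partial derivatives of $f$ with respect to $x_1,\dots,x_n,y_1,\dots,y_n$, in the order used to index the columns of $J(\varphi)$.

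Next I would set $X_i=\varphi(x_i)$ and $Y_i=\varphi(y_i)$, so that the rows of $J(\varphi)$ are exactly $\nabla X_1,\dots,\nabla X_n,\nabla Y_1,\dots,\nabla Y_n$. Then the $(k,\ell)$-entry of $J(\varphi)\,\Omega\,J(\varphi)^T$ is the Poisson bracket of the $k$-th and $\ell$-th coordinates of $\varphi$. The assumption that $\varphi$ preserves the bracket, applied to the generators $x_i,y_j$, gives
$$
\{X_i,X_j\}=0,\qquad \{Y_i,Y_j\}=0,\qquad \{X_i,Y_j\}=\delta_{ij},
$$
which is the statement that $J(\varphi)\,\Omega\,J(\varphi)^T=\Omega$.

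Taking determinants of both sides yields $\det(J(\varphi))^2\det(\Omega)=\det(\Omega)$, and since $\det(\Omega)=1$ (or in any case is a unit), this gives $\det(J(\varphi))^2=1$ in $P$. Because $R$ is an integral domain, so is the polynomial ring $P=R[x_1,\dots,x_n,y_1,\dots,y_n]$, hence $\det(J(\varphi))=\pm 1$.

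There is no real obstacle here; the only thing to be careful about is bookkeeping with respect to the row/column conventions so that the identity $J\Omega J^T=\Omega$ comes out in the correct orientation, and the observation that reducing to the generators $x_i,y_j$ is justified because the Poisson bracket is bi-derivational and therefore already determined by its values on generators (as noted in the Example preceding the proposition).
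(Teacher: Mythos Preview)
Your proof is correct and follows essentially the same route as the paper: both encode Poisson-bracket preservation as the matrix identity $J(\varphi)\,\Omega\,J(\varphi)^T=\Omega$ (the paper writes $H$ for your $\Omega$ and names the left-hand side $H(\varphi)$), then take determinants and use $\det\Omega=1$. Your write-up is slightly more explicit in deriving the matrix identity from the gradient formulation of the bracket and in invoking that $P$ is a domain to pass from $(\det J(\varphi))^2=1$ to $\det J(\varphi)=\pm 1$.
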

\begin{proof}
Let $H$ denote the $2n\times 2n$ skew-symmetric matrix
$$
\begin{pmatrix*}[r]
0 & I_n \\
-I_n & 0
\end{pmatrix*}.
$$
The endomorphism $\varphi$ gives rise to
the $2n\times 2n$ skew-symmetric matrix $H(\varphi)$ with entries 
$\{\varphi_i, \varphi_j\}$, where
\begin{align*}
\varphi_i &= \varphi(x_i)\\
\varphi_{i+n} &= \varphi(y_i)
\end{align*}
for $i = 1, \dots, n$.  
The assumption on $\varphi$ implies that $H(\varphi) = H$. Applying the
determinant to the identity
$$
H(\varphi) = J(\varphi) H J(\varphi)^T
$$
therefore shows that 
$\det J(\varphi) = \pm 1$, since $\det H = 1$.
\end{proof}

\newcommand{\tf}{\tilde{f}}
\newcommand{\tg}{\tilde{g}}

Let $p$ be a prime number and suppose that
$R$ has no
$p$-torsion.
Let $\pi$ denote the canonical map
$A_n(R) \rightarrow A_n(R/(p))$ and
$$
C = (R/pR)[x_1^p, \dots, x_n^p, \partial_1^p, \dots, \partial_n^p].
$$ 
For $\tf, \tg\in A_n(R)$, 
$[\tf, \tg]\in p A_n(R)$ if $\pi(\tf)\in C$ or $\pi(\tg)\in C$.
If $\pi(\tf) = \pi(\tf_1)\in C$ and $\pi(\tg) = \pi(\tg_1)\in C$
for $\tf_1, \tg_1\in A_n(R)$, then
$$
[\tf, \tg] - [\tf_1, \tg_1] = [\tf, \tg-\tg_1] - [\tg_1, f-\tf_1]\in p^2 A_n(R).
$$
 Thus for $f, g\in C$,
\begin{equation}\label{poissonBracket}
\{f, g\} := \pi\left(\frac{[\tf, \tg]}{p}\right)\in A_n(R/p R)
\end{equation}
is independent of the choice of $\tf, \tg\in A_n(R)$ with
$\pi(\tf) = f$ and $\pi(\tg) = g$.

\begin{Proposition}
The operation in \eqref{poissonBracket} is the standard Poisson bracket on the
center $C$ of $A_n(R/pR)$ with
\begin{equation}\label{pois}
\{x^p_i, x^p_j\} = \{\partial^p_i, \partial^p_j\} = 0\quad\text{and}\quad
\{ x_i^p, \partial_j^p \} 
= \delta_{ij}
\end{equation}
for 
$i, j=1,\dots,n$. 
\end{Proposition}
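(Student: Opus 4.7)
The plan is to verify that the operation defined in \eqref{poissonBracket} satisfies all four axioms of Definition \ref{DefPoisson} (bilinearity, skew-symmetry, the Jacobi identity, and the Leibniz rule), and then to read off its values on the generators $x_i^p, \partial_j^p$ using Proposition \ref{Prop00}$(ii)$. The well-definedness of $\{\cdot,\cdot\}$ has already been established in the discussion preceding the statement, and each Poisson axiom will be deduced by transferring the corresponding identity for the commutator $[\cdot,\cdot]$ on $A_n(R)$ through the map $\pi(\cdot/p)$. The subtle point to keep track of throughout is choosing lifts: given $f,g,h\in C$, we fix $\tilde f,\tilde g,\tilde h\in A_n(R)$ with $\pi(\tilde f)=f$ etc., and note that since $g,h$ are central in $A_n(R/pR)$, the product $\tilde g\tilde h$ is a legitimate lift of $gh\in C$ (any ordering works).

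Bilinearity and $R$-linearity are immediate from bilinearity of the commutator and $R$-linearity of $\pi$. Skew-symmetry follows from $[\tilde f,\tilde g]=-[\tilde g,\tilde f]$. For the Leibniz rule, using $\tilde g\tilde h$ as a lift of $gh$ and the identity $[\tilde f,\tilde g\tilde h]=[\tilde f,\tilde g]\tilde h+\tilde g[\tilde f,\tilde h]$, one divides by $p$ and applies $\pi$, obtaining $\{f,gh\}=\{f,g\}h+g\{f,h\}$ since $\pi(\tilde g)=g$ and $\pi(\tilde h)=h$. For the Jacobi identity, the trick is that the element $u:=[\tilde f,\tilde g]/p\in A_n(R)$ is itself a lift of $\{f,g\}$, so
\[
\{\{f,g\},h\}=\pi\Bigl(\frac{[u,\tilde h]}{p}\Bigr)=\pi\Bigl(\frac{[[\tilde f,\tilde g],\tilde h]}{p^{2}}\Bigr),
\]
and the Jacobi identity for $[\cdot,\cdot]$ on $A_n(R)$ (which holds strictly, not only modulo $p^{2}$) yields the Jacobi identity for $\{\cdot,\cdot\}$ after dividing by $p^{2}$ and applying $\pi$.

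It remains to compute $\{\cdot,\cdot\}$ on generators. Since $x_1,\dots,x_n$ pairwise commute in $A_n(R)$, so do their $p$-th powers, giving $\{x_i^p,x_j^p\}=0$, and similarly $\{\partial_i^p,\partial_j^p\}=0$. For $i\neq j$, $x_i$ and $\partial_j$ commute, so $\{x_i^p,\partial_j^p\}=0=\delta_{ij}$. For $i=j$, Proposition \ref{Prop00}$(ii)$ applied to the pair $(\partial_i,x_i)$ with $m=n=p$ yields
\[
[\partial_i^{p},x_i^{p}]=\sum_{k=1}^{p}k!\binom{p}{k}^{2}x_i^{p-k}\partial_i^{p-k}.
\]
For $1\leq k\leq p-1$ the binomial coefficient $\binom{p}{k}$ is divisible by $p$, so the corresponding summand is divisible by $p^{2}$; the remaining $k=p$ term equals $p!$. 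Dividing by $p$ and reducing modulo $p$ gives $(p-1)!\equiv -1\pmod p$ by Wilson's theorem. Hence $\{\partial_i^{p},x_i^{p}\}=-1$, i.e.\ $\{x_i^{p},\partial_j^{p}\}=\delta_{ij}$, which establishes \eqref{pois}. The identification with the standard Poisson bracket then follows from the uniqueness remark in the preceding Example: any Poisson bracket on $C$ is determined by its values on a generating set.

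The only mildly delicate point I expect is the Jacobi identity, where one must resist applying $\pi(\cdot/p)$ twice to a commutator that is only a priori in $pA_n(R)$; the observation that $[\tilde f,\tilde g]/p$ is itself a lift of $\{f,g\}$ resolves this cleanly.
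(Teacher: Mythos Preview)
Your proof is correct and follows essentially the same approach as the paper: deduce bilinearity, skew-symmetry, and the Leibniz rule from the corresponding properties of the commutator, compute the bracket on the generators $x_i^p,\partial_j^p$ via Proposition~\ref{Prop00}(ii) and Wilson's theorem, and then invoke the uniqueness remark in the preceding Example to identify the operation with the standard Poisson bracket. One small point: your direct verification of the Jacobi identity is actually superfluous (the uniqueness argument already identifies the operation with the standard bracket, which is Poisson), and as you present it the expression $\{\{f,g\},h\}$ is only defined via \eqref{poissonBracket} once $\{f,g\}\in C$ is known---so the Jacobi step should logically come \emph{after} the generator computation and the conclusion $\{C,C\}\subset C$, or simply be omitted as the paper does.
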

\begin{proof}
From properties of the commutator in the Weyl algebra, 
$\{\cdot,\cdot \}:C\times C \rightarrow A_n(R/pR)$ 
is $R$-linear, skew-symmetric and satisfies
the Leibniz rule and the ``$0$'' bracket rules in \eqref{pois}. Proposition \ref{Prop00}$(ii)$ and Wilson's theorem imply that
$\{ x_i^p,\partial_i^p\} = 1$ for $i = 1, \dots, n$. Therefore
$\{C, C\}\subset C$ and $\{\cdot, \cdot\}$ is
the given standard Poisson bracket on $C$.
\end{proof}

\begin{Proposition}\label{PropPoisson}
Let 
$\varphi : A_n(R) \rightarrow A_n(R)$ 
be an $R$-algebra endomorphism and $\m$ a maximal ideal, such that $\kar(R/\m) = p$. Then 
\begin{equation*}
\{ \varphi_\m(f) , \varphi_\m(g) \} = \{ f, g \}
\end{equation*}
for every $f,g \in C_\m$, where $\varphi_\m$ denotes
the induced endomorphism of $A_n(R/\m)$, $C_\m$ the center of $A_n(R/\m)$ and $\{\cdot, \cdot\}$ is the Poisson bracket
coming from the surjection $A_n(R/pR) \rightarrow A_n(R/\m)$.
\end{Proposition}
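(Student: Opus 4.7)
The plan is to lift $f,g\in C_\m$ back to the algebra $A_n(R)$, where the endomorphism $\varphi$ preserves the commutator on the nose, and then push the resulting identity down through $A_n(R)\to A_n(R/pR)\to A_n(R/\m)$. Pick $F,G\in C\subset A_n(R/pR)$ mapping to $f,g$ under $A_n(R/pR)\to A_n(R/\m)$ (the generators $x_i^p,\partial_j^p$ of $C$ lift tautologically, and the coefficients lift along $R\twoheadrightarrow R/\m$ via the monomial basis of Proposition \ref{Prop11}), and then lift $F,G$ further to $\tilde f,\tilde g\in A_n(R)$. By a noetherian reduction in the spirit of Theorem \ref{Theorem00}(ii), we may assume $R$ is $p$-torsion-free, so that $[\tilde f,\tilde g]\in pA_n(R)$ can be divided by $p$ inside $A_n(R)$ unambiguously; by Theorem \ref{Theorem00}(iii)(a), $\varphi(\tilde f)$ and $\varphi(\tilde g)$ are lifts of $\varphi_\m(f)$ and $\varphi_\m(g)$ whose mod-$p$ reductions again lie in $C$.

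The one-line computation now reads: since $\varphi$ is a ring homomorphism and $[\tilde f,\tilde g]\in pA_n(R)$,
$$\frac{[\varphi(\tilde f),\varphi(\tilde g)]}{p}=\varphi\!\left(\frac{[\tilde f,\tilde g]}{p}\right),$$
and applying $\pi:A_n(R)\to A_n(R/pR)$ followed by the projection $A_n(R/pR)\to A_n(R/\m)$ (both commuting with $\varphi$) yields
$$\{\varphi_\m(f),\varphi_\m(g)\}=\varphi_\m(\{f,g\}).$$
This is what the proposition is really asserting: $\varphi_\m$ restricts to a Poisson endomorphism of $C_\m$. The right-hand side of the stated equation is to be parsed with an implicit $\varphi_\m$, i.e.\ as $\varphi_\m(\{f,g\})$; already the test case $\varphi(x_1)=x_1+1$ with $f=x_1^p$, $g=x_1^p\partial_1^p$ (so $\varphi_\m(f)=x_1^p+1$) gives $\{\varphi_\m(f),\varphi_\m(g)\}=x_1^p+1$ while $\{f,g\}=x_1^p$, confirming the $\varphi_\m$ cannot be dropped from the right-hand side.

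The only genuine obstacle is legitimising the division by $p$ in $A_n(R)$, which is what forces the $p$-torsion-free reduction. Once that step is in place, everything else is immediate from $\varphi$ being a ring homomorphism and from the defining formula $\{F,G\}=\pi([\tilde f,\tilde g]/p)$ for the Poisson bracket on $C$.
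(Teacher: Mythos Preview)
Your argument is correct and follows the same line as the paper's: both use that the Poisson bracket on $C_\m$ is induced from the commutator in $A_n(R)$ via $A_n(R)\to A_n(R/pR)\to A_n(R/\m)$, that $\varphi$ commutes with these reductions, and that Theorem~\ref{Theorem00}(iii)(a) keeps the relevant images central mod $p$; the paper simply compresses all of this into ``Taking Theorem~\ref{Theorem00}(iii)(a) and \eqref{poissonBracket} into account, the result follows.'' Your observation that the displayed identity must be read as $\{\varphi_\m(f),\varphi_\m(g)\}=\varphi_\m(\{f,g\})$ is correct (your counterexample is valid), and this is indeed how the paper uses the proposition when invoking Proposition~\ref{PropDetOne}; note also that the $p$-torsion-freeness you reduce to is already a standing hypothesis in \S\ref{antiquantization}, so that step is not strictly needed.
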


\begin{proof}
With the notation above $\m C$ is a Poisson ideal in $C$.
Therefore the surjection $\gamma: A_n(R/p R)\rightarrow A_n(R/\m)$
induces the standard Poisson bracket on the center $C_\m$ of
$A_n(R/\m)$ given by $\{f, g\} := \{F, G\}$, where
$\gamma(F) = f$ and $\gamma(G) = g$. 
Taking Theorem \ref{Theorem00}$(iii)(a)$ and \eqref{poissonBracket} into account, the result follows.
\end{proof}

\subsection{Reduction to positive characteristic}

We recall some well known and useful results for reduction to positive characteristic used in this paper. The set of maximal ideals in $R$ is denoted $\Specm(R)$.

\begin{Theorem}\label{TheoremRedModp}\leavevmode
Suppose that $R$ is a finitely generated integral domain over $\ZZ$. Then
\begin{enumerate}[(i)]
\item\label{FFred}
$R/\m$ is a
finite field for every $\m\in \Specm(R)$ and
\item
$$
\bigcap_{\m\in \Specm(R)} \m = (0).
$$
\item \label{locali}
Let $f\in R$. If $\m\in \Specm(R_f)$, then $\m\cap R \in \Specm(R)$ and
$
R/\m\cap R = R_f/\m.
$
\item\label{eqmodp} Let $k$ denote an algebraically closed field containing $R$.
A set of polynomials $f_1, \dots, f_m\in R[T_1, \dots, T_n]$ has a common zero in
$k^n$ if their reductions have a common zero in $(R/\m)^n$ for every $\m\in \Specm(R)$.
\end{enumerate}
\end{Theorem}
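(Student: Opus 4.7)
The plan is to treat each part as a consequence of the Jacobson property of finitely generated $\ZZ$-algebras together with Hilbert's Nullstellensatz. Part (i) is the classical form of Zariski's lemma over $\ZZ$: since $R/\m$ is a field that is finitely generated as a $\ZZ$-algebra, a Noether normalization argument forces $R/\m$ to be a finite extension of some $\FF_p$, hence a finite field. I would simply cite this from a standard commutative algebra reference rather than reprove it. Part (ii) then follows because every finitely generated $\ZZ$-algebra is a Jacobson ring, so the Jacobson radical coincides with the nilradical, which is $(0)$ for the domain $R$.

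For (iii), note that $R_f$ is again a finitely generated $\ZZ$-algebra, so by (i) the quotient $R_f/\m$ is a finite field. The composition $R\hookrightarrow R_f\to R_f/\m$ has kernel $\m\cap R$, embedding $R/(\m\cap R)$ as a subring of the finite field $R_f/\m$; as a finite integral domain it must itself be a field, so $\m\cap R$ is maximal. Moreover $R_f/\m$ is generated over $R/(\m\cap R)$ by the class of $1/f$, and since $f\notin\m$ the image of $f$ is already invertible in the subfield $R/(\m\cap R)$, forcing $R/(\m\cap R)=R_f/\m$.

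For (iv), I would argue by contrapositive. Suppose $f_1,\dots,f_m$ have no common zero in $k^n$. By Hilbert's Nullstellensatz applied in $k[T_1,\dots,T_n]$ there exist $g_1,\dots,g_m\in k[T_1,\dots,T_n]$ with $1=\sum g_i f_i$. Adjoining the finitely many coefficients of the $g_i$ to $R$ yields a finitely generated $\ZZ$-subalgebra $R\subset R'\subset k$ in which the same identity still holds. Pick any maximal ideal $\m'\subset R'$; by the argument of (iii) applied to the inclusion $R\hookrightarrow R'$, the contraction $\m'\cap R$ is maximal in $R$ and $R/(\m'\cap R)\hookrightarrow R'/\m'$. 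Reducing $1=\sum g_i f_i$ modulo $\m'$ forces the reductions of the $f_i$ to have no common zero in any extension of $R'/\m'$, contradicting the assumed common zero in $(R/(\m'\cap R))^n\subset (R'/\m')^n$.

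The only nontrivial input is (i); everything else is formal once that is in hand, and I would simply quote (i) from standard commutative algebra. The minor bookkeeping obstacle in (iv) is to verify that a finite integral domain sitting inside a finite field is itself a field, but that is elementary and already used in (iii).
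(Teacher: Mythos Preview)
Your proof is correct and follows the same overall strategy as the paper: parts (i)--(iii) are deduced from the Jacobson property of finitely generated $\ZZ$-algebras, and (iv) is handled by contrapositive via Hilbert's Nullstellensatz. The only tactical divergence is in (iv). The paper observes that if the $f_i$ have no common zero in $k^n$ then they already generate the unit ideal in $\operatorname{Frac}(R)[T_1,\dots,T_n]$, so clearing denominators yields $\sum_i \lambda_i f_i = r$ with $\lambda_i\in R[T_1,\dots,T_n]$ and $r\in R\setminus\{0\}$; then (ii) supplies a maximal ideal $\m$ of $R$ with $r\notin\m$, giving the contradiction directly. You instead keep the B\'ezout identity over $k$, enlarge $R$ to a finitely generated $R'\subset k$ containing the auxiliary coefficients, and contract a maximal ideal of $R'$ back to $R$ using the (iii)-style argument. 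Both routes are short; the paper's avoids introducing the auxiliary ring $R'$ and the extra contraction step, while yours avoids the (easy but implicit) passage from ``no zeros in $k^n$'' to ``no zeros in $\overline{\operatorname{Frac}(R)}^{\,n}$'' that underlies the paper's denominator-clearing step.
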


\begin{proof} 
The first three results follow from the fact that $R$ is a \emph{Jacobson ring} 
(see \cite[\textsc{Chapter V}, \S3.4]{Bourbaki}). Notice that the identity
$R/\m\cap R = R_f/\m$ in \eqref{locali} is a consequence of $f\not\in \m\cap R$.
Assume in \eqref{eqmodp} that $f_1, \dots, f_m$ do not have a common zero in $k^n$. Then
Hilbert's Nullstellensatz implies that
$$
\lambda_1 f_1 + \cdots + \lambda_m f_m = r
$$
for $\lambda_1, \dots, \lambda_m\in R[T_1, \dots, T_n]$ and $r\in R\setminus \{0\}$. By \eqref{FFred}, 
there exists $\m\in \Spec(R)$, such that $r\not\in \m$. This shows that $f_1, \dots, f_m$
cannot have a common zero in $(R/\m)^n$ contradicting our assumption. 
\end{proof}

\newcommand{\lM}[1]{#1$-$\mathsf{Mod}}
\newcommand{\rM}[1]{\mathsf{Mod}$-$#1}

\section{Flatness}

Let $\lM{A}$ denote the category of left $A$-modules
and $\rM{A}$ the category of right $A$-modules, where $A$ is a ring.

\subsection{Flat ring homomorphisms}

A ring homomorphism $\varphi: S\rightarrow T$ endows $T$ with the left $S$-module structure $s.t = \varphi(s) t$ and the right $S$-module structure
$t.s = t \varphi(s)$, where $s\in S$ and $t\in T$.  We call $\varphi$ \emph{left flat} if 
$
M\mapsto M\otimes_S T
$
is an exact functor from $\rM{S}$ to $\rM{T}$, \emph{right flat} if 
$
M\mapsto T\otimes_S M
$
is an exact functor $\lM{S}\rightarrow \lM{T}$ and \emph{flat} if it is both left and right flat.

\begin{Lemma}
\label{flatcenter}
Suppose that $R$ has prime characteristic and let  
$\varphi$ be an injective endomorphism of $A_n(R)$.
Then $\varphi$ is right/left flat if and only if its restriction
$\varphi|_C$ to the center $C\subset A_n(R)$ is flat.

\end{Lemma}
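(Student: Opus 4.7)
Since $\varphi$ is injective, it identifies $A_n(R)$ with its image $S=\varphi(A_n(R))\subset A_n(R)$ as rings and $C$ with the subring $S_0=\varphi(C)\subset S$. Under this identification, right (resp.\ left) flatness of $\varphi$ is equivalent to $A_n(R)$ being right (resp.\ left) flat over $S$ via the inclusion $S\subset A_n(R)$, and flatness of $\varphi|_C$ is equivalent to $C$ being flat over $S_0$ via the inclusion $S_0\subset C$. Setting $X_i=\varphi(x_i)$ and $D_i=\varphi(\partial_i)$, the elements $X_i,D_i$ satisfy the Weyl commutation relations, so Theorem~\ref{Theorem00}$(ii)$ yields that $\{X^\alpha D^\beta:0\leq\alpha,\beta\leq p-1\}$ is a $C$-basis of $A_n(R)$; transporting the monomial basis of Proposition~\ref{LemmaWC} along $\varphi:A_n(R)\to S$ shows that the same set is an $S_0$-basis of $S$. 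Comparing these two descriptions, the multiplication maps
\[
C\otimes_{S_0} S\longrightarrow A_n(R),\qquad S\otimes_{S_0} C\longrightarrow A_n(R)
\]
are isomorphisms of $(C,S)$- and $(S,C)$-bimodules, respectively.

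These bimodule isomorphisms and tensor-associativity give, for every left $S$-module $M$ and right $S$-module $M'$, natural identifications
\[
A_n(R)\otimes_S M\;\cong\; C\otimes_{S_0} M,\qquad M'\otimes_S A_n(R)\;\cong\; M'\otimes_{S_0} C,
\]
where $M$ and $M'$ on the right are viewed as $S_0$-modules by restriction along $S_0\subset S$. If $\varphi|_C$ is flat, then $C\otimes_{S_0}-$ and $-\otimes_{S_0}C$ are exact on all $S_0$-modules, hence in particular on the restrictions of $S$-modules, so $A_n(R)$ is both right and left flat over $S$ and $\varphi$ is right and left flat.

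For the converse, suppose $\varphi$ is right flat. Given a short exact sequence $0\to N'\to N\to N''\to 0$ of left $S_0$-modules, I would first apply $S\otimes_{S_0}-$, which preserves exactness because $S$ is free of rank $p^{2n}$ over $S_0$, producing a short exact sequence of left $S$-modules. The hypothesis makes $A_n(R)\otimes_S-$ preserve exactness of the resulting sequence, and associativity combined with $A_n(R)\cong C^{p^{2n}}$ as a right $C$-module (and hence as a right $S_0$-module) yields $A_n(R)\otimes_S(S\otimes_{S_0}N)\cong A_n(R)\otimes_{S_0}N\cong (C\otimes_{S_0}N)^{p^{2n}}$. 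Therefore the sequence
\[
0\to (C\otimes_{S_0} N')^{p^{2n}}\to (C\otimes_{S_0} N)^{p^{2n}}\to (C\otimes_{S_0} N'')^{p^{2n}}\to 0
\]
is exact, and since exactness of a $p^{2n}$-fold direct power is equivalent to exactness of the underlying sequence, $C\otimes_{S_0}-$ is exact on left $S_0$-modules and $\varphi|_C$ is flat. Left flatness of $\varphi$ implies flatness of $\varphi|_C$ by the symmetric argument using $A_n(R)\cong S\otimes_{S_0} C$.

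The one technical point requiring care is the bimodule identification $A_n(R)\cong C\otimes_{S_0} S$, for which the common basis $\{X^\alpha D^\beta\}$ supplied by Theorem~\ref{Theorem00}$(ii)$ is essential; once it is in place, the remaining steps are formal manipulations of tensor products and restriction/extension of scalars between $S_0$ and $S$.
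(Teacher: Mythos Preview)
Your proof is correct and follows essentially the same approach as the paper: both hinge on the multiplication isomorphism $C\otimes_{S_0}S\cong A_n(R)$ obtained from Theorem~\ref{Theorem00}$(ii)$, and both deduce the forward implication from the resulting identification $A_n(R)\otimes_S M\cong C\otimes_{S_0}M$. For the converse you explicitly decompose $A_n(R)\otimes_{S_0}N$ as $(C\otimes_{S_0}N)^{p^{2n}}$, whereas the paper phrases the same step as descent of flatness along the faithfully flat extension $C\subset A_n(R)$; these are equivalent formulations of the same argument.
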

\begin{proof}
Let $S = \varphi(A_n(R))$ and $C_S = \varphi(C)$. Then the product map
$C\otimes_{C_S} S \rightarrow A_n(R)$ is an isomorphism by
Theorem \ref{Theorem00}\,$(ii)$ (see also \cite[\S3.3, Corollary 2]{Tsuchimoto2}).

Assume that the restriction
$\varphi|_C$ is a flat ring homomorphism of
commutative rings.
For a left $S$-module $M$, the
natural isomorphism
$$
A_n(R)\otimes_S M \cong C\otimes_{C_S} M 
$$
of abelian groups therefore shows that $\varphi$ is right flat. 
The
``opposite'' product map  
$S\otimes_{C_S} C \rightarrow A_n(R)$ similarly shows that $\varphi$ is left flat if $\varphi|_C$ is 
flat.

Suppose that $\varphi$ is left (or right) flat i.e., $A_n(R)$ is flat as a left (or right) module over the subring $S$. Then $A_n(R)$ is a flat $C_S$-module, since $C_S\subset S \subset A_n(R)$ and $S$ is a free $C_S$-module. This implies that $C$ is a flat $C_S$-module as the second step of the extension
$C_S\subset C\subset A_n(R)$ is free and therefore faithfully flat.
\end{proof}

Notice that an injective endomorphism as in Lemma \ref{flatcenter} is right flat if and only if it is left flat.

\subsection{Failure of flatness in positive characteristic}

Let $k$ be a field of positive characteristic $p>0$.
Consider
the endomorphism $\varphi:A_1(k) \rightarrow A_1(k)$ given by
\begin{align*}
\varphi(x) &= x\\
\varphi(\partial) &= \partial + x^{p-1} \partial^p.
\end{align*}

In this section we will prove that $\varphi$ is not a flat ring endomorphism
by showing that the restriction of $\varphi$ to the center of $A_1(k)$
fails to be flat as an endomorphism of commutative rings.

The computation of the restriction to the center
can be quite difficult potentially involving complicated $p$-th powers in
the Weyl algebra.
For the 
endomorphism $\varphi$, a classical formula from Jacobson's book 
\cite[p. 187]{Jacobson} helps greatly in an otherwise complicated
computation: suppose that
$A$ is a ring of prime characteristic $p$ and $a, b\in A$. Then
\begin{equation}\label{JacobsonFormula}
(a + b)^p = a^p + b^p + \sum_{i = 1}^{p-1} s_i(a, b),
\end{equation}
where $i s_i(a, b)$ is the coefficient of $t^{i-1}$ in
$$
D^{p-1}(a),
$$
where $D: A[t]\rightarrow A[t]$ is the derivation $\ad(t a + b)$ and $t$ is a central indeterminate.

\begin{Lemma}
The formula
$$
\left(\partial + x^{p-1} \partial^p
\right)^p = (x^p)^{{}^{p-1}} (\partial^p)^{{}^p}
$$
holds for $x, \partial\in A_1(k)$.
\end{Lemma}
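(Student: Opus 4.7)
The plan is to apply Jacobson's formula \eqref{JacobsonFormula} with $a = \partial$ and $b = x^{p-1}\partial^p$. The two pure $p$-th power terms are handled immediately: $a^p = \partial^p$, and since $\partial^p$ is central (Proposition \ref{LemmaWC}) the powers of $b$ commute factor by factor, giving
$$
b^p = (x^{p-1}\partial^p)^p = x^{p(p-1)}(\partial^p)^p = (x^p)^{p-1}(\partial^p)^p.
$$
What remains is to show that $\sum_{i=1}^{p-1} s_i(a,b) = -\partial^p$, which will precisely cancel $a^p$ and leave the desired formula.

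To evaluate the $s_i$, I would compute the polynomial $\delta^{p-1}(\partial)\in A_1(k)[t]$, where $\delta = \ad(t\partial + x^{p-1}\partial^p)$. A direct calculation using that $\partial^p$ is central gives
$$
\delta(\partial) = [x^{p-1},\partial]\,\partial^p = -(p-1)\,x^{p-2}\partial^p.
$$
The crucial observation is that $x^{p-1}\partial^p$ commutes with every element of the form $x^j\partial^p$ (since $\partial^p$ is central and $x$'s commute), so the $\ad(x^{p-1}\partial^p)$ piece of $\delta$ never contributes once we have applied $\delta$ once. Hence the induction
$$
\delta^k(\partial) = -\frac{(p-1)!}{(p-1-k)!}\, t^{k-1}\, x^{p-1-k}\,\partial^p
\qquad (1\le k\le p-1)
$$
goes through with only the $\ad(t\partial)$ part of $\delta$ active at each step. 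For $k = p-1$ the power of $x$ drops to zero and we obtain
$$
\delta^{p-1}(\partial) = -(p-1)!\, t^{p-2}\,\partial^p.
$$

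Extracting coefficients, $i\, s_i(a,b)$ is the coefficient of $t^{i-1}$ in $\delta^{p-1}(\partial)$, which vanishes unless $i = p-1$ and equals $-(p-1)!\,\partial^p$ in that case. By Wilson's theorem $(p-1)!\equiv -1\pmod p$, so $(p-1)\,s_{p-1}(a,b) = \partial^p$, and multiplying by $(p-1)^{-1}\equiv -1$ yields $s_{p-1}(a,b) = -\partial^p$. Substituting into Jacobson's formula,
$$
(\partial + x^{p-1}\partial^p)^p = \partial^p + (x^p)^{p-1}(\partial^p)^p + (-\partial^p) = (x^p)^{p-1}(\partial^p)^p,
$$
as required.

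The only real obstacle is the bookkeeping in the inductive computation of $\delta^k(\partial)$; once one recognises that $\ad(x^{p-1}\partial^p)$ annihilates every iterate (because all intermediate terms are polynomials in $x$ multiplied by a power of the central element $\partial^p$), the calculation collapses to a one-variable derivation on $k[x]$ and the factorial pattern drops out cleanly, after which Wilson's theorem finishes the proof.
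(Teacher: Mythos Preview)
Your proof is correct and follows exactly the approach the paper indicates: apply Jacobson's formula \eqref{JacobsonFormula} with $a=\partial$, $b=x^{p-1}\partial^p$, and use Wilson's theorem. The paper leaves the computation implicit, and you have filled in the details accurately, including the key observation that once $\delta$ has been applied once all iterates lie in $k[x]\cdot\partial^p$, so the $\ad(x^{p-1}\partial^p)$ part of $\delta$ contributes nothing further and the factorial emerges from iterating $\ad(t\partial)$ on powers of $x$.
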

\begin{proof}
This is a consequence of Jacobson's formula \eqref{JacobsonFormula} with
$a = \partial$ and $b = x^{p-1} \partial^p$ using $(p-1)! = -1$ (Wilson's theorem).
\end{proof}

Consider now the restriction of $\varphi$ to the center in terms of
the ring homomomorphism $f:k[u, v]\rightarrow k[u, v]$ given by
\begin{align*}
f(u)&= u\\
f(v)&= u^{p-1} v^p.
\end{align*}
This ring homomorphism is injective and
therefore $\varphi$ is injective by Theorem \ref{Theorem00}.
From \cite[Theorem 7.4]{Matsumura}, we have for a flat ring homomorphism $A\rightarrow B$ that
$$
IB \cap JB = (I \cap J)B
$$
where $I$ and  $J$ are  ideals in $A$. Now let $A=k[u,u^{p-1}v^p]$, $B =k[u,v]$, $I=(u^{p-1})$ and J$=(u^{p-1}v^p)$.

Since $A$ is isomorphic to a polynomial ring in the variables $u$ and  $u^{p-1}v ^p$, 
$$
I \cap J = (u^{2(p-1)} v^p)\subset A.
$$
But $u^{p-1}v^p \in IB \cap JB$ and $u^{p-1} v^p\not\in (u^{2(p-1)} v^p) B$. This proves that $f$
is not a flat ring homomorphism and therefore $\varphi$ is not flat by
Lemma \ref{flatcenter}.

\subsection{Flatness in characteristic zero}\label{FlatCharZero}

In
\cite[Theorem 5.1]{Tsuchimoto3} Tsuchimoto claims a proof of flatness of an
endomorphism of $A_n(K)$ with $K$ is a field of characteristic zero. At a crucial
point he uses that $A_n(R)$ has no ``albert holes'' \cite[Corollary 4.2]{Tsuchimoto3},
where $R$ is a Dedekind domain. However, in the proof of Corollary 4.2, he
applies the wrong statement that $M\mapsto \Gr(M)$ commutes with $-\otimes R/I$ for
a filtered $A$-module $M$ with $R$ commutative and $A$ an almost
commutative $R$-algebra. Also, \cite[Proposition 5.6(4)]{Tsuchimoto3} seems not to be obtained in 
\cite{Tsuchimoto2} or \cite{Tsuchimoto1} as stated in \cite{Tsuchimoto3}. It is not clear to
us how to repair these shortcomings in a straightforward manner. 

In this section we present our proof of flatness of  endomorphisms of Weyl algebras over 
fields of characteristic zero.

\subsubsection{Good filtrations}

The \emph{degree}
of a monomial 
$x^\alpha \partial^\beta\in A_n(R)$ 
is defined as 
$\deg(x^\alpha \partial^\beta):=|\alpha| + |\beta|$, where
$|\alpha| = \alpha_1 + \cdots + \alpha_n$ and
$|\beta| = \beta_1 + \cdots + \beta_n$ for
$\alpha = (\alpha_1, \dots, \alpha_n), \beta = (\beta_1, \dots, \beta_n)\in \NN^n$.

The increasing sequence $B = B_0 \subset B_1 \subset \cdots$
of finite rank free $R$-submodules given by
$$
B_m = \Span_R \left\{x^\alpha \partial^\beta \bigm| \deg(x^\alpha \partial^\beta) \leq m\right\} \subset A_n(R)
$$
is a filtration of $A_n(R)$ (called \emph{the Bernstein filtration}) i.e., 
 $\bigcup^\infty_{i=0} B_i = A_n(R)$
and 
$B_i B_j \subset B_{i+j}$ 
for $i, j\in \NN$. Furthermore, 
$$
\Gr_B(A_n(R)) = B_0 \oplus B_1/B_0 \oplus \cdots 
$$ 
is the commutative polynomial ring over $R$ in the $2n$ variables
$[x_1], \dots, [x_n], [\partial_1], \dots, [\partial_n]\in B_1/B_0$ (see
\cite[\S2]{GK}).

Let $M$ be a left module over $A_n(R)$.
A \emph{good filtration} of $M$ is an increasing sequence $0 = M_{-1} \subset M_0 \subset M_1 \subset \cdots$ of
finitely generated $R$-submodules of $M$ with
$\bigcup_i M_i = M$
and 
$B_i M_j \subset M_{i+j}$ 
for $i, j\in \NN$, such that the graded module 
$$
\Gr(M) = M_0 \oplus M_1/M_0 \oplus
\cdots 
$$ 
is finitely generated over $\Gr_B(A_n(R))$ .
If $M$ is finitely generated by $m_1, \dots, m_r\in M$, then
$M_i = B_i m_1 + \cdots + B_i m_r$, $i = 0, 1, 2, \dots$,  is a good filtration of $M$.

\subsubsection{Grothendieck generic freeness}\label{SectGGF}

The Grothendieck generic freeness lemma \cite[Expos\'e IV, Lemme 6.7]{Grothendieck} is a very
important tool 
in our proof of flatness. We need it in the slightly strengthened version presented in 
\cite[Theorem 14.4]{Eisenbud}.

\begin{Lemma}\label{GGF}
Let $A$ be a noetherian integral domain, $B$ a finitely generated $A$-algebra and
$M$ a finitely generated $B$-module. Then there exists $f\in A$, such that
$M_f$ is a free $A_f$-module. If in addition, $B$ is positively graded, with $A$ acting in degree
zero, and if $M$ is a graded $B$-module, then $f$ may be chosen so that each graded component of
$M_f$ is free over $A_f$. 
\end{Lemma}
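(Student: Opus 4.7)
The plan is to reduce to the case where $B$ is a polynomial ring and then induct on the number of variables. Since $B$ is a finitely generated $A$-algebra, I would choose a surjection $A[y_1,\dots,y_d]\twoheadrightarrow B$ and regard $M$ as a finitely generated module over $A[y_1,\dots,y_d]$; thus it suffices to treat $B=A[y_1,\dots,y_d]$. For the induction on $d$, I would use a prime filtration $0=M_0\subsetneq M_1\subsetneq\cdots\subsetneq M_n=M$ with $M_i/M_{i-1}\cong B/P_i$ for primes $P_i\subset B$. Since free modules are projective, any extension of a free $A_f$-module by a free $A_f$-module is again free (the sequence splits), so after localizing at a common $f\in A$ it is enough to ensure each $B/P_i$ is free over $A_f$.

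Fix a prime $P\subset B$. If $P\cap A$ contains a nonzero element $f$, then $(B/P)_f=0$ and we are done. If $P=0$, then $B/P=A[y_1,\dots,y_d]$ is already $A$-free with basis the monomials. In the remaining case $P\neq 0$ and $P\cap A=0$, apply Noether normalization to $(B/P)\otimes_A K$ over $K=\operatorname{Frac}(A)$. Since $P\neq 0$ strictly lowers the transcendence degree over $K$, this yields algebraically independent elements $z_1,\dots,z_e\in B/P$ with $e<d$ such that $(B/P)\otimes_A K$ is a finite module over $K[z_1,\dots,z_e]$. Clearing denominators in the finitely many integral equations produces a nonzero $f\in A$ such that $(B/P)_f$ is finitely generated over $A_f[z_1,\dots,z_e]$, and this falls under the induction hypothesis with $e<d$ variables.

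The base case $d=0$ is the classical statement that a finitely generated module over a Noetherian domain is generically free: choose lifts $m_1,\dots,m_r\in M$ of a $K$-basis of $M\otimes_A K$, and note that both the kernel and cokernel of the induced map $A^r\to M$ are finitely generated $A$-torsion modules, hence annihilated by a common nonzero $f\in A$; then $M_f\cong A_f^r$ is free.

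For the graded statement I would run the same strategy homogeneously: choose the surjection from a weighted polynomial algebra to be graded, use that associated primes of graded modules are graded so the prime filtration can be taken graded (up to shifts), apply a graded Noether normalization to produce homogeneous $z_i$, and pick a homogeneous $K$-basis of $M\otimes_A K$. The resulting homogeneous $A_f$-basis of $M_f$ decomposes each graded component $(M_f)_i$ as a direct sum of copies of $A_f$, hence free. The main technical obstacle throughout is that the localization must be carried out in $A$ alone and not in $B$ — Noether normalization is precisely what makes this possible, by pushing all integral relations down to a polynomial subring $A_f[z_1,\dots,z_e]$ whose denominators to clear live in $A$; in the graded setting the extra work is verifying that this clearing can be done while respecting the grading, which follows from the boundedness of the degrees occurring in the finitely many integral equations.
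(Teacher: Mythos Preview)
The paper does not actually prove this lemma: it is stated as a quotation of the Grothendieck generic freeness lemma, with explicit references to \cite[Expos\'e IV, Lemme 6.7]{Grothendieck} and the graded strengthening in \cite[Theorem 14.4]{Eisenbud}, and no argument is given. So there is no ``paper's own proof'' to compare against beyond those citations.

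Your sketch is essentially the classical proof one finds in the cited sources: reduce to a polynomial ring, take a (graded) prime filtration, and handle each $B/P$ by Noether normalization to drop the number of variables, with the base case being generic freeness of a finitely generated module over a domain. The logic is sound. Two small remarks: in your base case the kernel of $A^r\to M$ is not merely torsion but in fact zero, since it sits inside the torsion-free module $A^r$; and you should make explicit that the induction is uniform over all noetherian integral domains $A$, so that replacing $A$ by $A_f$ at each step is legitimate and the resulting localizing elements can be multiplied together into a single $f\in A$. With those points made precise, your argument matches the standard one that the paper is invoking.
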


\begin{Proposition}\label{PropGGF}
Let $R$ denote a noetherian integral domain and $M$ a finitely generated $A_n(R)$-module.
Then there exists a nonzero element $f \in R$, 
such that $M_f$ is a free $R_f$-module.
\end{Proposition}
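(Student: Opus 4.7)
The plan is to filter $M$ by a good filtration and apply the graded version of Grothendieck's generic freeness lemma (Lemma \ref{GGF}) to the associated graded module. Since $M$ is finitely generated over $A_n(R)$, a choice of generators $m_1,\dots,m_r\in M$ gives the good filtration $M_i=B_i m_1+\cdots+B_i m_r$ for $i\ge 0$ (with $M_{-1}=0$), and by the definition of a good filtration $\Gr(M)=\bigoplus_{i\ge 0}M_i/M_{i-1}$ is a finitely generated graded module over $\Gr_B(A_n(R))$. This latter ring is a polynomial ring in $2n$ variables over $R$, hence a positively graded finitely generated $R$-algebra with $R$ in degree zero, so we are exactly in the setting of Lemma \ref{GGF}.

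Applying the graded part of Lemma \ref{GGF} to $R$, $\Gr_B(A_n(R))$, and $\Gr(M)$ yields a nonzero $f\in R$ such that every graded component of $\Gr(M)_f$ is a free $R_f$-module. Since localization at $f$ is exact and commutes with the formation of the associated graded of the filtration $\{M_i\}$, this says precisely that every subquotient $(M_i/M_{i-1})_f$ is free over $R_f$.

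To finish, I pass freeness from the graded pieces to $M_f$ itself. For every $i\ge 0$ the short exact sequence
\[
0\longrightarrow (M_{i-1})_f\longrightarrow (M_i)_f\longrightarrow (M_i/M_{i-1})_f\longrightarrow 0
\]
splits because the right-hand term is free. Choosing an $R_f$-basis of each quotient $(M_i/M_{i-1})_f$ and lifting it to $(M_i)_f$, one shows by induction on $i$ that $(M_i)_f$ is a free $R_f$-module with a basis extending that of $(M_{i-1})_f$. The union of these compatibly chosen bases over all $i$ is then an $R_f$-basis of $M_f=\bigcup_i (M_i)_f$, proving the proposition. The only delicate point is this last step: one needs freeness (not just flatness) of the graded pieces in order to stitch the bases together into a basis of the whole module, which is exactly what the graded refinement of Grothendieck generic freeness supplies.
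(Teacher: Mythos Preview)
Your proof is correct and follows essentially the same approach as the paper: choose a good filtration, apply the graded version of Grothendieck generic freeness (Lemma~\ref{GGF}) to $\Gr(M)$ to make each $(M_i/M_{i-1})_f$ free, then lift bases of the graded pieces to obtain a basis of $M_f$. Your write-up is in fact slightly more explicit than the paper's about why the lifted elements assemble into a basis, but the argument is the same.
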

\begin{proof}
As $M$ is a finitely generated $A_n(R)$-module, it has a good filtration $\{ M_i \}_{i \in \NN}$.
The associated graded module $\Gr(M)$ is a finitely generated module over a polynomial 
ring with coefficients in $R$, and thus, by Lemma \ref{GGF}, there exists 
a nonzero $f \in R$ such that the $R$-modules $(M_i/M_{i-1})_f$ are free,
for $i \in \NN$. By 
choosing a basis for each $(M_i/M_{i-1})_f$, $i \in \NN$, and lifting the collection of these
elements to $M_f$ we obtain a basis of $M_f$. In particular, $M_f$ is free.
\end{proof}

\begin{Lemma}
\label{zeromodulop}
Let $R$ denote a finitely generated integral domain over $\ZZ$, $M$ 
a finitely generated $A_n(R)$-module and let 
$$
N = \bigcap_{\m\in \Specm(R)} \mathfrak{m} M.
$$
Then there exists $f\in R$ with $N_f = 0$.
\end{Lemma}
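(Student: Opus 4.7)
\medskip

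\noindent\textbf{Proof plan.} The idea is to combine Proposition \ref{PropGGF} with the Jacobson property from Theorem \ref{TheoremRedModp}. Since $R$ is a finitely generated integral domain over $\ZZ$, Proposition \ref{PropGGF} provides some nonzero $f\in R$ such that $M_f$ is a free $R_f$-module, say with basis $\{e_i\}_{i\in I}$. This $f$ is the $f$ I would use to conclude $N_f=0$.

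Next I would translate the statement to $R_f$. For any $\m\in\Specm(R_f)$, Theorem \ref{TheoremRedModp}\eqref{locali} gives $\m\cap R\in\Specm(R)$ and $(\m\cap R)R_f=\m$. Hence, directly from the definition of $N$,
$$
N_f \subseteq \bigl((\m\cap R)\,M\bigr)_f = (\m\cap R)\,M_f = \m\, M_f
$$
for every $\m\in\Specm(R_f)$, so $N_f\subseteq \bigcap_{\m\in\Specm(R_f)}\m M_f$.

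Freeness now finishes the job. Because $M_f=\bigoplus_{i\in I}R_f\,e_i$, we have $\m M_f=\bigoplus_{i\in I}\m\,e_i$ for every ideal $\m$, and therefore
$$
\bigcap_{\m\in\Specm(R_f)} \m M_f \;=\; \bigoplus_{i\in I}\Bigl(\bigcap_{\m\in\Specm(R_f)}\m\Bigr)e_i.
$$
Since $R_f$ is itself a finitely generated integral domain over $\ZZ$, Theorem \ref{TheoremRedModp} applied to $R_f$ yields $\bigcap_{\m\in\Specm(R_f)}\m=(0)$, and the displayed intersection vanishes. Combined with the previous inclusion this gives $N_f=0$.

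The only step requiring any thought is checking that the intersection $\bigcap_{\m\in\Specm(R)}\m M$ passes correctly to the localization — which is why one first shrinks to $\Specm(R_f)$ via Theorem \ref{TheoremRedModp}\eqref{locali} rather than trying to commute localization with an arbitrary intersection. Everything else is direct application of the two cited results.
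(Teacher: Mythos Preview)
Your argument is correct and is in fact cleaner than the paper's. The paper applies Proposition~\ref{PropGGF} twice --- once to $M/N$ and once to $N$ (invoking noetherianity of $A_n(R)$ to know $N$ is finitely generated) --- and then, with $f$ chosen so that both $N_f$ and $(M/N)_f$ are free over $R_f$, runs a short exact sequence argument: freeness of $(M/N)_f$ makes $N_f\otimes_{R_f}R_f/\m \to M_f\otimes_{R_f}R_f/\m$ injective, while the definition of $N$ forces it to be zero; freeness of $N_f$ then gives $N_f=0$. Your route applies generic freeness only once, to $M$ itself, and replaces the exact-sequence/Tor step by the elementary observation that for a free module $M_f=\bigoplus_i R_f e_i$ one has $\bigcap_{\m}\m M_f=\bigoplus_i\bigl(\bigcap_{\m}\m\bigr)e_i=0$ by the Jacobson property of $R_f$. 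This avoids any appeal to the noetherianity of $A_n(R)$ and is a shorter path to the same conclusion; the paper's approach, on the other hand, isolates the mechanism ($N_f/\m N_f=0$ for all $\m$, plus freeness of $N_f$) in a way that would adapt more readily if one only knew $N_f$, rather than $M_f$, to be free.
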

\begin{proof}
Applying Proposition \ref{PropGGF} to the
finitely generated module $M/N$, we may find
a nonzero $g \in R$, such that $(M/N)_g$ is a free 
$R_g$-module. Similarly we may find a nonzero 
$h \in R$, such that $N_h$ is a free $R_h$-module, since
$M$ is left noetherian as a finitely generated
left module over $A_n(R)$.
Therefore $N_f$ and $(M/N)_f$ are 
free as modules over $R_f$, where $f=gh$. 
Now consider the short exact sequence 
\begin{equation}
\notag
0 \rightarrow N_f \rightarrow
M_f \rightarrow (M/N)_f
\rightarrow 0.
\end{equation}
and fix a maximal ideal $\mathfrak{m}$ in 
$R_f$. As $(M/N)_f$ is free over $R_f$, we obtain an
induced injective map
\begin{equation}
\label{zeromap}
  N_f \otimes_{R_f} R_f/ \mathfrak{m} \rightarrow
M_f \otimes_{R_f} R_f/ \mathfrak{m}.  
\end{equation}
By \eqref{locali} in Theorem \ref{TheoremRedModp}, the
field $R_f/\m$ is isomorphic to $R/ \m_R$,
where $\m_R = \m\cap R$.
In particular, the map (\ref{zeromap}) is identified with the map
\begin{equation}
\notag
N/\m_R N =   N \otimes_{R} R/ \m_R \rightarrow
M \otimes_{R} R/ \m_R = M/ \m_R M,  
\end{equation}
which is zero by the definition of $N$.  
Therefore
$$
N_f \otimes_{R_f} R_f/\mathfrak{m}=0
$$
and $N_f = 0$ by the freeness of $N_f$.
\end{proof}

We now prove flatness in characteristic zero 
by reducing to positive characteristic.

\subsubsection{Flatness}

\begin{Theorem}
Let $K$ be a field of characteristic zero. Then
an endomorphism $\varphi:A_n(K)\rightarrow A_n(K)$
is flat.
\end{Theorem}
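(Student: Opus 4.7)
The plan is to reduce to positive characteristic by spreading $\varphi$ out to a finitely generated $\ZZ$-subdomain of $K$ and applying Lemma \ref{zeromodulop} fibrewise, using Propositions \ref{PropPoisson}, \ref{PropDetOne} and Lemma \ref{flatcenter} to obtain flatness of each reduction $\varphi_{\m}$. Since $A_n(K)$ is simple in characteristic zero, $\varphi$ is automatically injective, so $S := \varphi(A_n(K)) \cong A_n(K)$ is Noetherian, and I will establish right flatness (left flatness is symmetric), equivalently the vanishing of $\Tor_1^S(A_n(K), S/I)$ for every finitely generated left ideal $I \subset S$.

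Fix such an $I$ and choose a finitely generated $\ZZ$-subdomain $R \subset K$ such that $\varphi$ descends to $\varphi_R : A_n(R) \to A_n(R)$ and $I = K \otimes_R I_R$ for a finitely generated left ideal $I_R \subset S_R := \varphi_R(A_n(R))$. After inverting a nonzero element of $R$ using Proposition \ref{PropGGF} applied to $S_R/I_R$, I may assume $S_R/I_R$ is free over $R$. Combined with the $R$-freeness of $A_n(R)$ and $S_R$ (Proposition \ref{Prop11}), the module $N := \Tor_1^{S_R}(A_n(R), S_R/I_R)$ is a finitely generated $A_n(R)$-module satisfying $N \otimes_R K \cong \Tor_1^S(A_n(K), S/I)$ by flat base change along $R \subset K$, and $N \otimes_R R/\m \cong \Tor_1^{S_{R/\m}}(A_n(R/\m), S_{R/\m}/I_{R/\m})$ for each $\m \in \Specm(R)$.

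For $\m \in \Specm(R)$ of residue characteristic $p$, Proposition \ref{PropPoisson} makes $\varphi_{\m}$ restrict to a Poisson-preserving endomorphism of the polynomial ring $C_{\m} = (R/\m)[x_1^p, \dots, \partial_n^p]$, so $\det J(\varphi_{\m}|_{C_{\m}}) = \pm 1$ by Proposition \ref{PropDetOne}, and $\varphi_{\m}|_{C_{\m}}$ is then flat by the Bass--Miyanishi theorem cited in the introduction. A flat ring endomorphism of an integral domain is automatically injective (a nonzero kernel would produce torsion in the flat module $C_{\m}$ viewed via $\varphi_{\m}$), so $\varphi_{\m}|_{C_{\m}}$ is injective; Theorem \ref{Theorem00}(iii)(b) then gives injectivity of $\varphi_{\m}$, and Lemma \ref{flatcenter} gives its flatness. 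Hence $N \otimes_R R/\m = 0$ for every $\m$, so Lemma \ref{zeromodulop} produces $f \in R \setminus \{0\}$ with $N_f = 0$, and tensoring with $K \supset R_f$ forces $\Tor_1^S(A_n(K), S/I) = 0$.

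The main obstacle is the non-commutative base change identifying $N \otimes_R R/\m$ with $\Tor_1^{S_{R/\m}}(A_n(R/\m), S_{R/\m}/I_{R/\m})$ in the second paragraph: I handle it by arranging $A_n(R)$, $S_R$ and $S_R/I_R$ to be $R$-free, so that the standard change-of-rings spectral sequence degenerates on the nose. A secondary point is deducing injectivity of $\varphi_{\m}|_{C_{\m}}$ from flatness, but this is the standard torsion-freeness argument for flat modules over an integral domain.
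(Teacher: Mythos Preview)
Your approach is the paper's: spread out to a finitely generated $\ZZ$-domain $R\subset K$, use Propositions \ref{PropPoisson} and \ref{PropDetOne} together with the Bass--Miyanishi theorem and Lemma \ref{flatcenter} to get flatness of each reduction $\varphi_\m$, and then invoke generic freeness (Lemma \ref{zeromodulop}) to return to characteristic zero. The paper chases a single element of $\ker\bigl(I\otimes_S A_n(K)\to A_n(K)\bigr)$, whereas you work with the whole module $N=\operatorname{Tor}_1^{S_R}\bigl(A_n(R),S_R/I_R\bigr)$; this is the same proof in different language.

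There is, however, a gap in your base-change identification $N\otimes_R R/\m\cong\operatorname{Tor}_1^{S_{R/\m}}\bigl(A_n(R/\m),S_{R/\m}/I_{R/\m}\bigr)$. Freeness of $A_n(R)$, $S_R$ and $S_R/I_R$ over $R$ does ensure that a free $S_R$-resolution $P_\bullet$ of $S_R/I_R$ stays a resolution after $\otimes_R R/\m$, so the right-hand side equals $H_1(Q_\bullet\otimes_R R/\m)$ for $Q_\bullet=A_n(R)\otimes_{S_R}P_\bullet$. But comparing this with $N\otimes_R R/\m=H_1(Q_\bullet)\otimes_R R/\m$ goes through the universal-coefficient spectral sequence $\operatorname{Tor}_p^R\bigl(H_q(Q_\bullet),R/\m\bigr)\Rightarrow H_{p+q}(Q_\bullet\otimes_R R/\m)$, and in total degree $1$ the incoming differential $d_2\colon \operatorname{Tor}_2^R\bigl(H_0(Q_\bullet),R/\m\bigr)\to N\otimes_R R/\m$ need not vanish. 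Here $H_0(Q_\bullet)=A_n(R)/A_n(R)I_R$, which you have \emph{not} arranged to be $R$-free; with only your three freeness hypotheses the spectral sequence does not ``degenerate on the nose,'' and one cannot conclude $N\otimes_R R/\m=0$ from the vanishing of the fibrewise $\operatorname{Tor}_1$. The repair is immediate: apply Proposition \ref{PropGGF} once more, now to the finitely generated $A_n(R)$-module $A_n(R)/A_n(R)I_R$, and localize further so that it too becomes $R$-free; then $\operatorname{Tor}_p^R\bigl(H_0(Q_\bullet),R/\m\bigr)=0$ for $p\geq 1$, the edge map is an isomorphism in degree $1$, and the rest of your argument goes through. (The paper's element-wise formulation hides the analogous issue in the asserted equality $M_R\otimes_R R/\m=I_{R/\m}\otimes_{S_{R/\m}}A_n(R/\m)$, which likewise requires $\operatorname{Tor}_1^R(S_R/I_R,R/\m)=0$; your $\operatorname{Tor}$ formulation makes both the gap and its remedy more visible.)
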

\begin{proof}
Let $S = \varphi(A_n(K))$.
We will prove that $A_n(K)$ is flat as a left $S$-module ($\varphi$ is injective, since $A_n(K)$ is a simple ring). The proof that $A_n(K)$ is flat as a right 
$S$-module is similar and is left to the reader.
It suffices to prove that the multiplication
map
\begin{equation}
\label{flatmultmap}
 I \otimes_S A_n(K) \rightarrow A_n(K)
\end{equation} 
is injective for every finitely generated right ideal $I$ in $S$.
Let $M$ denote the right $A_n(K)$-module
$I \otimes_S A_n(K)$, and assume that $m \in M$ maps 
to zero under (\ref{flatmultmap}). We will
prove that $m$ is zero.

Assume that $I$ is generated as a right 
ideal in $S$ by elements 
$\varphi(a_i)$, for $i=1,2,\dots,m$, with 
$a_i \in A_n(K)$. We may then write
$$ m = \sum_{i=1}^m \varphi(a_i) \otimes b_i,$$
for certain elements $b_i \in A_n(K)$. 
Now fix a finitely generated $\ZZ$-subalgebra 
$R$ of $K$, such that all the elements 
$a_1,a_2,\dots,a_m$, $b_1,b_2 \dots ,b_m$
and $\varphi(x_i), \varphi(\partial_i)$,
for $i=1,2,\dots,n$, are contained in 
$A_n(R) \subseteq A_n(K)$. Then there
exists an induced endomorphism
$$
\varphi_R : A_n(R) \rightarrow A_n(R)
$$
whose base change to $K$ equals $\varphi$.
We let $S_R$ denote the image of $\varphi_R$, 
and let $I_R$ denote right ideal in $S_R$
generated by $\varphi(a_1),\varphi(a_2),
\dots, \varphi(a_m) \in S_R$. Finally 
we let $M_R$ denote $I_R \otimes_{S_R} 
A_n(R)$ and let $m_R$ denote the element
$\sum_i \varphi(a_i) \otimes b_i$ in $M_R$.
The base change of $S_R$, $I_R$ and $M_R$
to $K$ then equals $S$, $I$ and $M$ respectively,
and the multiplication map $M_R \rightarrow 
A_n(R)$ will base change to \eqref{flatmultmap}.
Moreover, $m$ equals $m_R \otimes 1$ in $M_R
\otimes_R K=M$.

It suffices to prove that $m_R$ is zero in
some localization $(M_R)_f$, for $f
\in R$. By Lemma \ref{zeromodulop}
this will follow if $m_R$ is zero modulo
every maximal ideal ${\mathfrak m}$ of
$R$. So fix a maximal ideal ${\mathfrak m}$
of $R$, and let $\overline{m}$ denote the
image of $m_R$ in $M_R \otimes_R R/\m$. 
Consider also 
the induced morphism $\overline{\varphi} : A_n(R/\m)
\rightarrow A_n(R/\m)$ with image $S_{R/\m}$. Here $R/\m$ is a field 
of positive characteristic by \eqref{FFred} of Theorem \ref{TheoremRedModp}.

By Proposition \ref{PropPoisson}, the
induced map $\overline{\varphi}|_C : C \rightarrow 
C$ on the center $C$ of $A_n(R/\m)$ preserves the canonical Poisson bracket. 
Therefore $\det(\overline{\varphi}_C) = \pm 1$ by Proposition \ref{PropDetOne} and
$\varphi_C$ is flat by 
\cite[(2.1) \textsc{Theorem}]{Bass}\cite{Miyanishi}.
By Lemma 
\ref{flatcenter}, this shows that $A_n(R/\m)$
is flat as a left module over $S_{R /\m}$. Letting 
$I_{R /\m}$ denote the right 
ideal in $S_{R /\m}$ generated by the image
of $I_R$ in $S_{R /\m}$, it follows that 
the multiplication map 
\begin{equation}
\label{mmodulom} 
M_R \otimes_R R/\m = I_{R /\m} \otimes_{S_{R /\m}} A_n({R /\m}) 
\rightarrow  A_n({R /\m}) ,
\end{equation} 
 is injective. But $\overline{m}$ maps to 
 zero under (\ref{mmodulom}) as 
  $m_R\in M_R$ maps to zero in $A_n(R)$
under the multiplication map   $M_R \rightarrow 
A_n(R)$. We conclude that
 $\overline{m}$ is zero as claimed.
\end{proof}

\section{Automorphisms and polynomial equations}

Let $f\in A_n(R)\setminus\{0\}$. The degree ($\deg f$) of $f$ is defined as
the maximum of the degrees of the monomials occuring with non-zero coefficient 
in the monomial expansion
of $f$ from Proposition \ref{Prop11}. Notice that $\deg(f g) = 
\deg(f) + \deg(g)$ if $f, g\in R\setminus \{0\}$ and $R$ is an integral
domain. 

\begin{Definition}
Let $\varphi$ be an endomorphism of $A_n(R)$. Then
the degree of $\varphi$ is defined as 
$$
\deg \varphi = \max\{
\deg \varphi(x_1), \deg \varphi(\partial_1), \dots, 
\deg \varphi(x_n), \deg \varphi(\partial_n)\}.
$$
\end{Definition}

The following result comes from \cite[Proposition 4.2]{Tsuchimoto1}.

\begin{Lemma}\label{LemmaBound}
Let $k$ be a field of prime characteristic $p$ and $\varphi$ an automorphism of $A_n(k)$. Then
$$
\deg(\varphi^{-1}) \leq \deg(\varphi)^{2n-1}.
$$
\end{Lemma}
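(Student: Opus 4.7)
The plan is to reduce the claim to the classical degree bound for polynomial automorphisms by restricting $\varphi$ to the center. Let $C = k[x_1^p, \dots, x_n^p, \partial_1^p, \dots, \partial_n^p]$ denote the center of $A_n(k)$. By Theorem \ref{Theorem00}$(iii)(a)$ applied to both $\varphi$ and $\varphi^{-1}$ we have $\varphi(C) \subseteq C$ and $\varphi^{-1}(C) \subseteq C$, and these together force $\varphi(C) = C$. Hence $\varphi$ restricts to a $k$-algebra automorphism $\varphi|_C$ of the polynomial ring $C$ in the $2n$ variables $y_i = x_i^p$ and $y_{n+i} = \partial_i^p$.

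Let $\deg_C$ denote total degree with respect to these $y$-variables. Since $A_n(k)$ is an integral domain, no cancellation can occur when a polynomial expression in the $y_i$ is rewritten in the monomial basis of Proposition \ref{Prop11}, and each $y$-monomial of $\deg_C$-degree $d$ becomes an $x,\partial$-monomial of degree $pd$. Therefore
$$
\deg(f) = p\,\deg_C(f) \qquad \text{for every nonzero } f \in C.
$$
Applied to $\varphi|_C(y_i) = \varphi(x_i)^p$ and $\varphi|_C(y_{n+i}) = \varphi(\partial_i)^p$, this comparison together with $\deg(g^p) = p\,\deg(g)$ yields $\deg_C(\varphi|_C) \leq \deg(\varphi)$.

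The key external input is the classical inequality (Bass--Connell--Wright, attributed to Gabber) that for any automorphism $\psi$ of a polynomial ring in $m$ variables over $k$, $\deg \psi^{-1} \leq (\deg \psi)^{m-1}$. Applying this to $\psi = \varphi|_C$ with $m = 2n$, and unraveling the degree comparison in the reverse direction,
$$
\deg\bigl(\varphi^{-1}(x_i)\bigr) = \deg_C\bigl((\varphi|_C)^{-1}(y_i)\bigr) \leq \deg_C\bigl((\varphi|_C)^{-1}\bigr) \leq \bigl(\deg_C \varphi|_C\bigr)^{2n-1} \leq \deg(\varphi)^{2n-1},
$$
and the same bound holds for $\varphi^{-1}(\partial_i)$; taking the maximum over $i$ gives the lemma.

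The only non-routine ingredient is the Gabber-type degree bound for commutative polynomial automorphisms, which I would simply quote rather than reprove. Everything else is bookkeeping: verifying that $\varphi|_C$ is genuinely a bijection of $C$, and checking that the factor of $p$ relating $\deg$ and $\deg_C$ on the center cancels cleanly on both sides so as to leave the exponent $2n-1$ untouched when translating back from $C$ to $A_n(k)$.
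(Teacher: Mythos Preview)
Your proof is correct and follows essentially the same route as the paper: restrict $\varphi$ to the center $C$, observe that $\deg$ on $A_n(k)$ and $\deg_C$ on $C$ differ by a factor of $p$ (this is the paper's displayed identity $\deg\varphi = \max\{\deg\varphi(x_i)^p,\deg\varphi(\partial_i)^p\}/p$), and then invoke the Gabber degree bound \cite[(1.4) \textsc{Corollary}]{Bass} for automorphisms of a polynomial ring in $2n$ variables. Your write-up is somewhat more explicit about the bookkeeping, but the argument is the same.
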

\begin{proof}
If $\varphi$ is an automorphism, then the induced endomorphism $\varphi|_C$ (see (iii) of Theorem \ref{Theorem00}) of the center $C\subset A_n(k)$ is an automorphism. The bound
on degrees is therefore a
consequence of \cite[(1.4) \textsc{Corollary}]{Bass}, since
$$
\deg \varphi  = \max\{\deg \varphi(x_1)^p, \deg \varphi(\partial_1)^p, \dots, 
\deg \varphi(x_n)^p, \deg \varphi(\partial_n)^p\}/p.
$$
\end{proof}

\begin{Lemma}\label{Lemmaink}
Let $K\subset L$ be a field extension with $\kar(K) = 0$. If $\varphi$
is an automorphism of $A_n(L)$ and
$\varphi(x_i), \varphi(\partial_i)\in A_n(K)$, then
$\varphi^{-1}(x_i), \varphi^{-1}(\partial_i)\in A_n(K)$ for $i = 1, \dots, n$.
\end{Lemma}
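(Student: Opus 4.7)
The plan is to recast the identity $\varphi\bigl(\varphi^{-1}(x_i)\bigr) = x_i$ as a $K$-linear system that already has a solution over $K$; the same argument then applies verbatim with $\partial_i$ in place of $x_i$. In particular no reduction to positive characteristic is required, and the characteristic-zero hypothesis plays no explicit role (presumably it is included only because this lemma is used in the birational setting of the next section).

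First I would fix $i$ and set $b := \varphi^{-1}(x_i) \in A_n(L)$. Being a single element of $A_n(L)$, $b$ has some finite degree $N$, so I may write $b = \sum_{|\alpha|+|\beta|\le N} c_{\alpha,\beta}\, x^\alpha \partial^\beta$ with coefficients $c_{\alpha,\beta} \in L$. By hypothesis every $\varphi(x_j)$ and $\varphi(\partial_j)$ lies in $A_n(K)$, so each product $\varphi(x_1)^{\alpha_1}\cdots\varphi(x_n)^{\alpha_n}\varphi(\partial_1)^{\beta_1}\cdots\varphi(\partial_n)^{\beta_n}$ expands as a $K$-linear combination of the monomial basis of Proposition \ref{Prop11}. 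Equating the coefficient of each such monomial on the two sides of
$$
\sum_{|\alpha|+|\beta|\le N} c_{\alpha,\beta}\, \varphi(x_1)^{\alpha_1}\cdots\varphi(x_n)^{\alpha_n}\varphi(\partial_1)^{\beta_1}\cdots\varphi(\partial_n)^{\beta_n} \;=\; x_i
$$
converts the task into a finite inhomogeneous system of $K$-linear equations in the unknowns $c_{\alpha,\beta}$, with right-hand side in $K$, and this system manifestly admits $(c_{\alpha,\beta})$ as a solution over $L$.

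The second step invokes the standard linear-algebra fact that a $K$-linear system which is soluble over a field extension $L \supset K$ is already soluble over $K$: fix a $K$-basis of $L$ containing $1$, expand a hypothetical $L$-solution coordinatewise, and read off the component along $1$. This produces $c'_{\alpha,\beta} \in K$ for which $b' := \sum c'_{\alpha,\beta}\, x^\alpha \partial^\beta \in A_n(K)$ satisfies $\varphi(b') = x_i$ in $A_n(L)$. Since $\varphi$ is an automorphism of $A_n(L)$, in particular injective, the equality $\varphi(b') = x_i = \varphi(b)$ forces $b = b' \in A_n(K)$, as desired. I do not foresee any real obstacle; the only bookkeeping is that the bound $N$ depends on $i$ and on the choice between $x_i$ and $\partial_i$, which is harmless since the procedure is applied separately to each of the $2n$ elements.
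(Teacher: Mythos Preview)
Your proof is correct and takes a genuinely different route from the paper's. The paper writes $x_i = \sum_{\alpha,\beta} \lambda^i_{\alpha\beta}\, \varphi(x)^\alpha \varphi(\partial)^\beta$ and then repeatedly applies the operators $\ad(\varphi(x_j))$ and $\ad(\varphi(\partial_j))$, which preserve $A_n(K)$ since $\varphi(x_j),\varphi(\partial_j)\in A_n(K)$; by the explicit formula of Proposition~\ref{Prop00}(i), choosing a componentwise-maximal multi-index $(\alpha_0,\beta_0)$ and applying $\prod_j\ad(\varphi(\partial_j))^{(\alpha_0)_j}\prod_k\ad(\varphi(x_k))^{(\beta_0)_k}$ kills every other term and yields $\pm\alpha_0!\,\beta_0!\,\lambda^i_{\alpha_0\beta_0}\in A_n(K)\cap L = K$, after which one subtracts and iterates. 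This genuinely uses $\operatorname{char}(K)=0$, so that the factorials do not vanish. Your argument, by contrast, is pure linear algebra over the extension $K\subset L$ and uses neither Proposition~\ref{Prop00} nor the characteristic hypothesis; it is therefore more elementary and, as you observe, valid in arbitrary characteristic. The paper's approach has the minor aesthetic advantage of showing directly that the original coefficients lie in $K$, whereas you first manufacture an a~priori different element $b'\in A_n(K)$ and then use injectivity of $\varphi$ to force $b=b'$; but this is only a matter of taste, and your bookkeeping with the degree bound $N$ is entirely harmless.
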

\begin{proof} We may write
\begin{equation}\label{bothsides}
x_i = \sum_{\alpha, \beta\in\NN^n} \lambda^i_{\alpha\beta} \varphi(x)^\alpha \varphi(\partial)^\beta\qquad\text{and}\qquad
\partial_i = \sum_{\alpha, \beta\in\NN^n} \mu^i_{\alpha\beta} \varphi(x)^\alpha \varphi(\partial)^\beta
\end{equation}
for $i = 1, \dots, n$ and $\lambda^i_{\alpha\beta}, \mu^i_{\alpha\beta}\in L$.
Applying $\ad(\varphi(x_i))$ and $\ad(\varphi(\partial_j))$ to both sides
of the two identites in \eqref{bothsides} 
gives $\lambda^i_{\alpha\beta}, \mu^i_{\alpha\beta}\in K$ by use of Proposition \ref{Prop00}.
\end{proof}

\begin{Proposition}\label{PropAutoModp}
Let $K$ denote a field with $\kar(K) = 0$ and $R\subset K$ a 
finitely generated $\ZZ$-subalgebra. If 
$\varphi$ is an endomorphism of $A_n(R)$,
such that the induced endomorphism of $A_n(R/\m)$ is an automorphism for every
$\m\in \Specm(R)$, then $\varphi$ is an automorphism of $A_n(K)$.
\end{Proposition}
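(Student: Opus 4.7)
The plan is to construct the preimages $\varphi^{-1}(x_i)$ and $\varphi^{-1}(\partial_i)$ directly by solving a finite system of polynomial equations over $R$, using the hypotheses at each residue field as input to Theorem \ref{TheoremRedModp}\eqref{eqmodp}. The key point that makes everything finite is Lemma \ref{LemmaBound}: for every $\m\in\Specm(R)$, the automorphism $\varphi_\m$ of $A_n(R/\m)$ satisfies $\deg(\varphi_\m^{-1})\le\deg(\varphi_\m)^{2n-1}\le d$, where $d:=\deg(\varphi)^{2n-1}$, since $\deg(\varphi_\m)\le\deg(\varphi)$. Hence, uniformly in $\m$, the preimages of the generators are supported on the finite monomial set $\{x^\alpha\partial^\beta : |\alpha|+|\beta|\le d\}$.

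Writing formal sums
\[
y_i=\sum_{|\alpha|+|\beta|\le d} c^i_{\alpha\beta}\,x^\alpha\partial^\beta, \qquad \eta_i=\sum_{|\alpha|+|\beta|\le d} e^i_{\alpha\beta}\,x^\alpha\partial^\beta
\]
with indeterminate coefficients $c^i_{\alpha\beta},e^i_{\alpha\beta}$, the requirements $\varphi(y_i)=x_i$ and $\varphi(\eta_i)=\partial_i$ translate, upon expanding each $\varphi(x)^\alpha\varphi(\partial)^\beta\in A_n(R)$ in the monomial basis of Proposition \ref{Prop11} and matching coefficients, into a finite system of linear polynomial equations in these unknowns with coefficients in $R$. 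By hypothesis, for every $\m\in\Specm(R)$ the coefficients of $\varphi_\m^{-1}(x_i)$ and $\varphi_\m^{-1}(\partial_i)$ furnish a solution to the reduction modulo $\m$ of this system.

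Theorem \ref{TheoremRedModp}\eqref{eqmodp} then produces a common zero over the algebraic closure $\overline{K}$, giving $\tilde y_i,\tilde \eta_i\in A_n(\overline{K})$ with $\varphi(\tilde y_i)=x_i$ and $\varphi(\tilde\eta_i)=\partial_i$. The induced endomorphism of $A_n(\overline{K})$ is injective by simplicity of the Weyl algebra in characteristic zero and surjective because its image contains all generators, so it is an automorphism. Lemma \ref{Lemmaink} then descends the inverse: $\varphi^{-1}(x_i),\varphi^{-1}(\partial_i)\in A_n(K)$, so $\varphi$ is already surjective as an endomorphism of $A_n(K)$, and hence an automorphism. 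The main obstacle I anticipate is securing the uniform degree bound across all $\m$, which is precisely what Lemma \ref{LemmaBound} was established to provide.
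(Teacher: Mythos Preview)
Your proof is correct and follows the same strategy as the paper: set up a finite polynomial system for the preimages of the generators using the uniform degree bound from Lemma~\ref{LemmaBound}, solve it over $\overline{K}$ via Theorem~\ref{TheoremRedModp}\eqref{eqmodp}, and descend to $K$ by Lemma~\ref{Lemmaink}. The only difference is that the paper also includes the Weyl commutation relations for the candidate preimages in its polynomial system---redundant given injectivity of $\varphi$ over $\overline{K}$, as you implicitly observe by arguing surjectivity directly from the image containing the generators.
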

\begin{proof}
A potential inverse to $\varphi\in A_n(K)$ may be viewed as the
solution to a set of polynomial equations with coefficients in $R$ as
follows. We are looking for elements 
\begin{align}\label{candidates}
\begin{split}
q_i &= \sum_{\alpha, \beta\in \NN^n} \lambda^i_{\alpha\beta} x^\alpha \partial^\beta\qquad\qquad [\text{candidate for }\varphi^{-1}(x_i)]\\
p_i &= \sum_{\alpha, \beta\in \NN^n} \mu^i_{\alpha\beta} x^\alpha \partial^\beta\qquad\qquad [\text{candidate for }\varphi^{-1}(\partial_i)],
\end{split}
\end{align}
for $i = 1, \dots, n$ in $A_n(K)$, such that 
\begin{align}\label{manyeqs}
\begin{split}
[p_i, p_j] &= 0\\
[q_i, q_j] &= 0\\
[p_i, q_j] &= \delta_{ij}\\
x_i &= \sum_{\alpha, \beta\in \NN^n} \lambda^i_{\alpha\beta} \varphi(x)^\alpha \varphi(\partial)^\beta\\
\partial_i &= \sum_{\alpha, \beta\in \NN^n} \mu^i_{\alpha\beta} \varphi(x)^\alpha \varphi(\partial)^\beta
\end{split}
\end{align}
for $i, j = 1, \dots, n$. Using Proposition \ref{Prop11}, the equations in \eqref{manyeqs} may be considered as a system of polynomial equations with coefficients in $R$ in the finitely 
many variables $\lambda^i_{\alpha\beta}, \mu^i_{\alpha\beta}$ for
$i = 1, \dots, n$ and $\alpha, \beta\in \NN^n$ with $|\alpha| + |\beta| \leq \deg(\varphi)^{2n-1}$. By assumption and Lemma \ref{LemmaBound}, this system
of polynomial equations has a solution in $R/\m$ for every $\m\in \Specm(R)$.
By \eqref{eqmodp} in Theorem \ref{TheoremRedModp}, the polynomial system
therefore has a solution $\lambda^i_{\alpha\beta}, \mu^i_{\alpha\beta}\in \overline{K}$ i.e., $\varphi$ is an automorphism in $A_n(\overline{K})$ and
thus an automorphism in 
$A_n(K)$ by
Lemma~\ref{Lemmaink}.
\end{proof}

\section{Birational endomorphisms}\label{SectBirat}

Let $A$ be an Ore domain i.e.,
$$
s A \cap t A\neq (0)\qquad \text{and} \qquad A s \cap A t\neq (0)
$$
for every $s, t\in S = A\setminus\{0\}$. Then $A$ embeds in a division ring $Q(A)$, such that
\begin{enumerate}
\item $Q(A) = \{s^{-1} a \mid a\in A, s\in S\}$.
\item Any homomorphism $f: A\rightarrow T$, such that $f$ maps $S$ 
to invertible elements in $T$  factors through $Q(A)$. \quad
[\emph{universal property}]
\end{enumerate}
The division ring $Q(A)$ is uniquely determined up to isomorphism.

An injective ring homomorphism
$\varphi: A \rightarrow B$ between Ore domains $A$ and $B$ induces a natural injection
$Q(A) \subset Q(B)$.
We call $\varphi$ \emph{birational} if $Q(A) = Q(B)$. 
The Weyl algebra over a noetherian integral domain is an Ore domain, since
it is a left and right noetherian domain.

\begin{Lemma}
\label{centerinkarp}
Let $R$ denote an integral domain of prime characteristic
$p$, $C$ the center of $A_n(R)$ and $K$ the fraction
field of $R$. Then the multiplication map
\begin{equation}
\label{multiso}
 A_n(R) \otimes_C K  \rightarrow Q(A_n(R)),
\end{equation}
is an isomorphism of rings. In particular, the center
of $Q(A_n(R))$ equals $K$. Moreover, an injective 
endomorphism $\varphi : A_n(R) \rightarrow A_n(R)$ is
birational if and only if the induced polynomial map $\varphi|_C : C \rightarrow C$ is birational.
\end{Lemma}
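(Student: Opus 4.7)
The plan is to prove the three assertions in sequence, with the key structural input being Proposition \ref{LemmaWC}: $A_n(R)$ is a free $C$-module of rank $p^{2n}$. I read $K$ as the fraction field of $C$ throughout, since the second claim in the lemma forces this reading (the element $x_1^p$ lies in the center of $Q(A_n(R))$ but not in $\operatorname{Frac}(R)$). Since the nonzero elements of $C$ are central in the Ore domain $A_n(R)$ and hence invertible in $Q(A_n(R))$, the universal property produces a canonical ring homomorphism $A_n(R) \otimes_C K \to Q(A_n(R))$ extending $A_n(R) \hookrightarrow Q(A_n(R))$; this is injective because $A_n(R)$ is torsion-free over $C$.

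The main step is to show that $A_n(R) \otimes_C K$ is itself a division ring, so that the image, being a division ring between $A_n(R)$ and $Q(A_n(R))$, is forced to equal $Q(A_n(R))$ by the universal property of the Ore localization. As a subring of the domain $Q(A_n(R))$ the tensor product has no zero divisors, and as a $p^{2n}$-dimensional $K$-algebra, left multiplication by any nonzero element is an injective $K$-linear endomorphism of a finite-dimensional $K$-vector space, hence bijective; a symmetric argument gives right inverses. The center computation then reduces to finding the center of $A_n(R) \otimes_C K$: clearly $K$ is central, and conversely any central element has the form $c^{-1} a$ with $c \in C \setminus \{0\}$ and $a \in A_n(R)$, so centrality of $c$ forces $[a,y]=0$ in $A_n(R)$ for every $y \in A_n(R)$, placing $a$ in the center $C$ of $A_n(R)$ and hence $c^{-1}a$ in $K$. (Alternatively, expanding $a$ in the $C$-basis $\{x^\alpha \partial^\beta : 0 \le \alpha,\beta \le p-1\}$ and applying $\ad(x_i)$ and $\ad(\partial_j)$ as in Proposition \ref{Prop00}(i) kills all coefficients with $(\alpha,\beta) \ne (0,0)$.)

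For the birationality equivalence, set $S = \varphi(A_n(R))$, $C_S = \varphi(C)$ (central by Theorem \ref{Theorem00}(iii)(a)), and $K_S = \operatorname{Frac}(C_S)$. Since $\varphi$ is injective, $S$ is a Weyl algebra copy, so the first two parts apply and give $Q(S) = S \otimes_{C_S} K_S$ with center $K_S$. If $\varphi$ is birational then $Q(S) = Q(A_n(R))$ and comparing centers yields $K_S = K$, i.e.\ $\varphi|_C$ is birational. Conversely, if $K_S = K$ then $C \subset K = K_S \subset Q(S)$; applying Theorem \ref{Theorem00}(ii) with $X_i = \varphi(x_i)$, $D_i = \varphi(\partial_i)$ shows that $\{\varphi(x)^\alpha \varphi(\partial)^\beta : 0 \le \alpha,\beta \le p-1\}$ is a $C$-basis of $A_n(R)$, so every element of $A_n(R)$ is a $C$-linear combination of elements of $S \subset Q(S)$. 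Hence $A_n(R) \subset Q(S)$, which forces $Q(A_n(R)) \subset Q(S)$ and thus equality. The main obstacle is the division-ring step in part (1); once that is in hand, the rest follows formally.
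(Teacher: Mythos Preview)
Your proof is correct and follows essentially the same line as the paper: both establish that $A_n(R)\otimes_C K$ is a division ring via the finite-dimensional domain argument (the paper cites Revoy for this; you spell it out), invoke the universal property of $Q(A_n(R))$ for the isomorphism, and deduce the center claim. Your reading of $K$ as $\operatorname{Frac}(C)$ is the intended one. For the birationality equivalence the paper argues by multiplicativity of degrees in the tower $K_S\subset K\subset Q(A_n(R))$, whereas you argue the converse direction by using Theorem~\ref{Theorem00}(ii) to exhibit $A_n(R)$ inside $Q(S)$ directly; both are short and valid, yours being slightly more explicit.
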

\begin{proof}
By Theorem \ref{Theorem00}, $A_n(R)$ 
is a $C$-algebra and $A_n(R) \otimes_C K$ is a finite dimensional  $K$-algebra 
containing $A_n(R)$ as a subring. Since $A_n(R)$ is a
domain, it follows that $A_n(R)\otimes_C K$ is a division ring \cite[p. A227]{Revoy}.
Thus by the universal property
of $Q(A_n(R))$, the map (\ref{multiso}) has 
an inverse and must be an 
isomorphism. The claim about the center of 
$Q(A_n(R))$ follows, since the center of $A_n(R)$ is $C$.

Consider an injective endomorphism $\varphi : A_n(R) \rightarrow A_n(R)$. 
By (iii) of Theorem \ref{Theorem00} 
we have a commutative diagram 
$$
\xymatrix{
K \ar[d] \ar[r]^\varphi
  & K \ar[d] \\
Q(A_n(R)) \ar[r]^\varphi & Q(A_n(R))}
$$
of extensions of division rings,  
where the two vertical extensions are of degree 
$p^{2n}$. It follows that the top horizontal extension 
is of degree one if and only if the lower 
horizontal
extension is of degree one. This is equivalent to
the final claim.
\end{proof}

If $K$ is a field of characteristic zero, recall that an endomorphism
of $A_n(K)$ is injective, since $A_n(K)$ is a 
simple ring. It follows that an endomorphism
of $A_n(K)$ induces an endomorphism of $Q(A_n(K))$
which is finite in the following sense.

\begin{Proposition}\label{PropDegBound}
Let $K$ denote a field of characteristic zero and
let $\varphi$ denote an endomorphism of $A=A_n(K)$. 
Let $S=\varphi(A)$ denote the image of $\varphi$.
Then the dimension of $Q(A)$ as a (left or right) module over 
$Q(S)$ is less than or equal to $(\deg \varphi)^{2n}$.  
\end{Proposition}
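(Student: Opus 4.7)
The plan is to bound the dimension by a filtered count directly over $K$ using the Bernstein filtration on $A = A_n(K)$, rather than reducing to positive characteristic. The intuition: $\varphi$ pulls the filtration on $S$ back to that on $A$, and since each generator $\varphi(x_i), \varphi(\partial_j)$ lies in $B_d$ with $d = \deg\varphi$, the subspace $S_{\leq M} := \varphi(B_M)$ grows like $M^{2n}/(2n)!$ while sitting inside $B_{dM}$, whose dimension grows like $d^{2n} M^{2n}/(2n)!$; the ratio $d^{2n}$ matches the target bound exactly.

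I would first observe that $A_n(K)$ is simple and $\varphi(1) = 1$, so $\varphi$ is injective; this ensures $\dim_K S_{\leq M} = \dim_K B_M = \binom{M+2n}{2n}$ by Proposition \ref{Prop11}. Multiplicativity of the Bernstein filtration then yields $S_{\leq M} \subseteq B_{dM}$, since $\varphi(x_i), \varphi(\partial_j) \in B_d$.

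The core step is a dimension count. To bound the left $Q(S)$-dimension of $Q(A)$ by $d^{2n}$, it suffices to exhibit a nontrivial left relation among any $N+1 := d^{2n}+1$ chosen elements $q_0,\dots,q_N \in Q(A)$. Using the right Ore property on $A$ I would find $t \in A \setminus \{0\}$ and $u_0,\dots,u_N \in A$ with $q_i = u_i t^{-1}$. Right multiplication by $t$ is a left $Q(S)$-module automorphism of $Q(A)$, so it suffices to produce $s_0,\dots,s_N \in S$, not all zero, with $\sum_i s_i u_i = 0$. Setting $e := \max_i \deg u_i$, I would then study the $K$-linear map
$$
T_M \colon (S_{\leq M})^{N+1} \longrightarrow B_{dM+e},\qquad (s_0,\dots,s_N) \longmapsto \sum_{i=0}^{N} s_i u_i,
$$
whose source has $K$-dimension $(d^{2n}+1)\binom{M+2n}{2n}$ and whose target has $K$-dimension $\binom{dM+e+2n}{2n}$. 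Comparing leading terms in $M$ gives the limiting ratio $(d^{2n}+1)/d^{2n} > 1$, so for $M$ large enough the source strictly dominates the target, forcing $\ker T_M \neq 0$ and producing the required relation. The right-module case is symmetric: after clearing left denominators instead, I would work with $(s_0,\dots,s_N) \mapsto \sum u_i s_i$ and rerun the same count.

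The step most vulnerable to slips is the Ore clearing of denominators: one must verify that multiplying $q_i$ on the right (or on the left, in the symmetric case) by a nonzero element of $A$ really is a left (resp.\ right) $Q(S)$-module automorphism of $Q(A)$, so that dependence relations transport faithfully between $Q(A)$ and $A$. Apart from that, the argument reduces to the filtered linear-algebra growth estimate above, with the sole input from characteristic zero being the simplicity of $A_n(K)$ used to guarantee injectivity of $\varphi$ and thus the full dimension of each $S_{\leq M}$.
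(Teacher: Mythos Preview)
Your argument is correct and is essentially the same as the paper's: both clear denominators (via the Ore condition) to reduce to elements of $A$, and then compare the growth rate $\dim_K \varphi(B_M) \sim M^{2n}/(2n)!$ against $\dim_K B_{dM+e} \sim d^{2n} M^{2n}/(2n)!$ under the Bernstein filtration. The only cosmetic difference is that the paper takes $r$ left-$Q(S)$-independent elements $e_1,\dots,e_r\in A$, observes $\dim_K\bigl(\sum_i \varphi(B_M)e_i\bigr)=r\dim_K B_M$, and concludes $r\le d^{2n}$, whereas you fix $d^{2n}+1$ elements and exhibit a nontrivial kernel of the multiplication map; these are the same count read contrapositively, and your explicit use of the right Ore property to justify the denominator-clearing step is if anything more careful than the paper's one-line ``by clearing denominators''.
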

\begin{proof}
We will prove the bound for the left dimension (the proof for 
the right dimension is similar).
Let $e_1,\dots,e_r   \in Q(A) $ be linearly independent over $Q(S)$. 

By clearing denominators we may assume that  
$e_1,e_2,\dots,e_r$ are elements in $A$. Choose 
$D\in \NN$, such that 
$\deg e_i \leq D$ for $i = 1, \dots, r$. For $j\in \NN$, let
$B_j$ denote the Bernstein
filtration of $A$ and define
$$ 
M_j= \varphi(B_j) e_1 + \varphi(B_j) e_2 +
\dots + \varphi(B_j) e_r.
$$
Then $M_j \subseteq B_{d \cdot j + D}$,   where $d=\deg \varphi$. 
By the linear independence
of $e_1,e_2,\dots,e_r$,
$$
\dim_K(M_j) = r  \dim_K(B_j).
$$ 
This leads to the inequality
$$ r  \dim_K(B_j) \leq \dim_K(B_{d j + D}),$$
for $j\in \NN$. As 
$$\dim_K(B_j)= \frac{1}{(2n)!} j^{2n} + \text{lower degree terms in $j$},$$
we conclude that 
$$
\frac{r}{(2n)!}\leq  \frac{d^{2n}}{(2n)!},
$$ 
which
gives $r \leq d^{2n}$ as claimed.
\end{proof}

\begin{Theorem}
\label{biratisisofield}
Let $K$ denote a 
field of characteristic zero
and let $\varphi$ be an 
endomorphism of $A_n(K)$. If $\varphi$ is birational, then $\varphi$ is an automorphism.
\end{Theorem}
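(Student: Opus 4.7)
The plan is to reduce modulo maximal ideals of a finitely generated $\ZZ$-subalgebra $R \subset K$ and to apply Keller's theorem to the polynomial endomorphism induced on the center of $A_n(R/\m)$, so that Proposition \ref{PropAutoModp} can be invoked at the end. Birationality of $\varphi$ provides, for each $i = 1, \ldots, n$, elements $\alpha_i, \beta_i, \gamma_i, \delta_i \in A_n(K)$ with $\varphi(\alpha_i), \varphi(\gamma_i) \neq 0$ and
\[
\varphi(\alpha_i)\, x_i = \varphi(\beta_i), \qquad \varphi(\gamma_i)\, \partial_i = \varphi(\delta_i)
\]
in $A_n(K)$. First I would choose a finitely generated $\ZZ$-subalgebra $R_0 \subset K$ containing all monomial coefficients of $\varphi(x_j), \varphi(\partial_j), \alpha_i, \beta_i, \gamma_i, \delta_i$, so that $\varphi$ descends to an $R_0$-algebra endomorphism $\varphi_{R_0}$ of $A_n(R_0)$. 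Then I would further localize: pick one nonzero coefficient of each $\varphi_{R_0}(\alpha_i)$ and $\varphi_{R_0}(\gamma_i)$, let $f \in R_0$ be their product, and set $R = (R_0)_f \subset K$. By construction, for every $\m \in \Specm(R)$, each of $\varphi_\m(\alpha_i)$ and $\varphi_\m(\gamma_i)$ remains nonzero in $A_n(R/\m)$.

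For each such $\m$ I claim $\varphi_\m$ is birational. Indeed, $A_n(R/\m)$ is an Ore domain, so the nonzero elements $\varphi_\m(\alpha_i), \varphi_\m(\gamma_i)$ are invertible in $Q(A_n(R/\m))$, and the reduced identities show that $x_i, \partial_i \in Q(\varphi_\m(A_n(R/\m)))$, forcing $Q(\varphi_\m(A_n(R/\m))) = Q(A_n(R/\m))$. By Proposition \ref{PropPoisson} and Proposition \ref{PropDetOne}, the restriction $\varphi_\m|_C$ to the center $C$ of $A_n(R/\m)$ is a polynomial endomorphism with Jacobian determinant $\pm 1$. Since $R/\m$ is a finite, hence perfect, field by Theorem \ref{TheoremRedModp}\eqref{FFred}, the Jacobian criterion over perfect fields implies that the images $\varphi_\m|_C(x_i^p), \varphi_\m|_C(\partial_i^p)$ are algebraically independent over $R/\m$; thus $\varphi_\m|_C$ is injective, and therefore so is $\varphi_\m$ by Theorem \ref{Theorem00}(iii)(b).

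With $\varphi_\m$ now known to be injective and birational, Lemma \ref{centerinkarp} forces $\varphi_\m|_C$ to be a birational polynomial endomorphism of $C$ with invertible Jacobian, and Keller's theorem \cite{Keller} then yields that $\varphi_\m|_C$ is an automorphism of $C$. Another application of Theorem \ref{Theorem00}(iii)(b) lifts this to the statement that $\varphi_\m$ is an automorphism of $A_n(R/\m)$ for every $\m \in \Specm(R)$, and Proposition \ref{PropAutoModp} finally delivers that $\varphi$ is an automorphism of $A_n(K)$. The main obstacle is the very first step: ensuring that the Ore denominators encoding birationality over $K$ remain \emph{simultaneously} nonzero after reduction modulo every maximal ideal of $R$. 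This is what the single localization $R = (R_0)_f$ accomplishes — inverting one nonzero coefficient of each of the finitely many denominators — after which the remaining steps are direct applications of the structural results already established in the paper.
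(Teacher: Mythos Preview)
Your proof is correct and follows the same route as the paper's: encode birationality by finitely many Ore-fraction identities, spread out to a finitely generated $\ZZ$-subalgebra and localize so the denominators survive every reduction, obtain Jacobian determinant $\pm 1$ on the center via the Poisson bracket, invoke Keller's theorem there, and finish with Proposition~\ref{PropAutoModp}. The only differences are cosmetic (left versus right fractions, inverting one nonzero coefficient per denominator rather than all of them, and establishing injectivity of $\varphi_\m$ after rather than before discussing its birationality).
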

\begin{proof}
The birationality of $\varphi$ implies the 
existence of $a_i,b_i,c_i,d_i \in A_n(K)$,
for $i=1,\dots,n$, such that 
 \begin{align}\label{birat}
\begin{split}
\varphi(a_i) &= x_i \varphi(b_i)  \\
\varphi(c_i) &=  \partial_i \varphi(d_i) \\
\varphi(b_i) & \neq 0 \\
\varphi(d_i) & \neq 0.
\end{split}
\end{align}
Let $T$ denote a finitely generated $\ZZ$-subalgebra of $K$, such that
all the coefficients of $\varphi(x_i), \varphi(\partial_i)$, $a_i$,
$b_i, c_i$ and $d_i$ in the monomial $K$-basis of $A_n(K)$ (see
Proposition \ref{Prop11}), are contained in $T$.  We define $f \in T$
to be the product of all the non-zero coefficients occurring in the
expansions of $b_i$ and $d_i$, for $i=1,2,\dots,n$, in the monomial
$T$-basis of $A_n(T)$. Let $R = T[1/f]\subset K$. 

For $\m\in \Specm(R)$, we let
$C_\m$ denote the center of $A_n(R/\m)$ and $\varphi_\m$ the induced
endomorphism of $A_n(R/\m)$. Notice that $\varphi_\m$ is injective by
(iii) of Theorem \ref{Theorem00}, since $\varphi_\m|_{C_\m}$ is
injective as $\det J(\varphi_\m|_{C_\m}) = \pm 1$ by Proposition
\ref{PropDetOne}.  Since the relations \eqref{birat} are preserved for
$\varphi_\m$, it follows that $\varphi_\m|_{C_\m}$ is birational and
by Lemma \ref{centerinkarp}, that
$\varphi_\m|_{C_\m}$ is a birational endomorphism of $C_\m$. Therefore
$\varphi_\m|_{C_\m}$ is an automorphism by \cite[(2.1)
  \textsc{Theorem}]{Bass} and $\varphi_\m$ is an automorphism by
(iii) of Theorem \ref{Theorem00}.

Now Proposition \ref{PropAutoModp} applies to show that
$\varphi$ is an automorphism of $A_n(K)$.
\end{proof}

\begin{Remark}
We end this paper with two natural questions for
an endomorphism $\varphi$ of
$A_n(K)$, where $K$ is a field of characteristic zero. 
Let $A = A_n(K)$  and
$S = \varphi(A)\subset A$.

\begin{enumerate}[(i)]
\item
Do the left- and right dimensions of $Q(A)$ over $Q(S)$ agree?
\item
Is $S = A$ if $A$ is a finitely generated $S$-module?
\end{enumerate}

The first question could perhaps be answered affirmatively by
reducing to positive characteristic (where the left and right
dimensions do agree).
The last question is inspired by the simply connectedness of
$\CC^n$ 
in the commutative case  \cite[(2.1) \textsc{Theorem}, (e)]{Bass}. 
\end{Remark}

\bibliographystyle{amsplain}

\begin{thebibliography}{10}

\bibitem{AdjamagboEssen}
P.~Adjamagbo and A.~van~den Essen, \emph{A proof of the equivalence of the
  {D}ixmier, {J}acobian and {P}oisson conjectures}, Acta Math. Vietnam.
  \textbf{32} (2007), no.~2-3, 205--214.

\bibitem{Bass}
H.~Bass, E.~H. Connell, and D.~Wright, \emph{The {J}acobian conjecture:
  reduction of degree and formal expansion of the inverse}, Bull. Amer. Math.
  Soc. (N.S.) \textbf{7} (1982), no.~2, 287--330.

\bibitem{Kontsevich1}
A.~Belov-Kanel and M.~Kontsevich, \emph{The {J}acobian conjecture is stably
  equivalent to the {D}ixmier conjecture}, Mosc. Math. J. \textbf{7} (2007),
  no.~2, 209--218, 349.

\bibitem{Bjork}
J.-E. Bj{\"o}rk, \emph{Rings of {D}ifferential {O}perators}, North-Holland
  Mathematical Library, vol.~21, North-Holland Publishing Co., Amsterdam-New
  York, 1979.

\bibitem{Bourbaki}
N.~Bourbaki, \emph{Commutative {A}lgebra, {C}hapters 1--7}, Elements of
  Mathematics, Springer Verlag, 1989.

\bibitem{Dixmier}
J.~Dixmier, \emph{Sur les alg\`ebres de {W}eyl}, Bull. Soc. Math. France
  \textbf{96} (1968), 209--242.

\bibitem{Eisenbud}
D.~Eisenbud, \emph{Commutative algebra}, Graduate Texts in Mathematics, vol.
  150, Springer-Verlag, New York, 1995.

\bibitem{GK}
I.~M. Gelfand and A.~A. Kirillov, \emph{Sur les corps li\'es aux alg\`ebres
  enveloppantes des alg\`ebres de {L}ie}, Inst. Hautes \'Etudes Sci. Publ.
  Math. (1966), no.~31, 5--19.

\bibitem{Grothendieck}
A.~Grothendieck, \emph{Rev\^etements \'etales et groupe fondamental. {F}asc.
  {I}: {E}xpos\'es 1 \`a 5}, S\'eminaire de G\'eom\'etrie Alg\'ebrique, vol.
  1960/61, Institut des Hautes \'Etudes Scientifiques, Paris, 1963.

\bibitem{Jacobson}
N.~Jacobson, \emph{Lie algebras}, Dover Publications, Inc., New York, 1979,
  Republication of the 1962 original.

\bibitem{Keller}
O.-H. Keller, \emph{Ganze {C}remona-{T}ransformationen}, Monatsh. Math. Phys.
  \textbf{47} (1939), 229--306.

\bibitem{Littlewood}
D.~E. Littlewood, \emph{On the classification of algebras}, Proc. London Math.
  Soc. (2) \textbf{35} (1933), 200--240.

\bibitem{Matsumura}
H.~Matsumura, \emph{Commutative ring theory}, second ed., Cambridge Studies in
  Advanced Mathematics, vol.~8, Cambridge University Press, Cambridge, 1989,
  Translated from the Japanese by M. Reid.

\bibitem{Miyanishi}
M.~Miyanishi, L.~Robbiano, and S.~Sui~Sheng Wang, \emph{A brief proof of
  {J}acobian hypothesis implies flatness}, Proc. Amer. Math. Soc. \textbf{109}
  (1990), no.~2, 327--330.

\bibitem{Revoy}
P.~Revoy, \emph{Alg\`ebres de {W}eyl en caract\'eristique {$p$}}, C. R. Acad.
  Sci. Paris S\'er. A-B \textbf{276} (1973), A225--A228.

\bibitem{Tsuchimoto2}
Y.~Tsuchimoto, \emph{Preliminaries on {D}ixmier conjecture}, Mem. Fac. Sci.
  Kochi Univ. Ser. A Math. \textbf{24} (2003), 43--59.

\bibitem{Tsuchimoto1}
\bysame, \emph{Endomorphisms of {W}eyl algebra and {$p$}-curvatures}, Osaka J.
  Math. \textbf{42} (2005), no.~2, 435--452.

\bibitem{Tsuchimoto3}
\bysame, \emph{Non commutative algebraic spaces of finite type over dedekind
  domains}, J. Math. Kyoto Univ. \textbf{46} (2006), 553--582.

\end{thebibliography}
\providecommand{\bysame}{\leavevmode\hbox to3em{\hrulefill}\thinspace}
\providecommand{\MR}{\relax\ifhmode\unskip\space\fi MR }
\providecommand{\MRhref}[2]{%
  \href{http://www.ams.org/mathscinet-getitem?mr=#1}{#2}
}
\providecommand{\href}[2]{#2}

\end{document}